\def\k{\mathfrak k}      
\def\n{\mathfrak n}      
\def\p{\mathfrak p}      
\def\r{\mathfrak r}
\def\c{\mathbf c}
\newtheorem{theorem}{Theorem}[section]
\newtheorem{lemma}{Lemma}
\newtheorem{corollary}{Corollary}
\newtheorem{proposition}{Proposition}
\theoremstyle{conjecture}
\theoremstyle{definition}
\newtheorem*{definition}{Definition}
\theoremstyle{question}
\newtheorem*{question}{Question}
\theoremstyle{questions}
\newtheorem*{notation}{Notation}
\theoremstyle{examples}
\newtheorem*{examples}{Examples}
\theoremstyle{remark}
\newtheorem*{remark}{Remark}
\theoremstyle{remarks}
\theoremstyle{example}
\numberwithin{equation}{section}
\begin{document}

\title{Exponential of the $S^1$ Trace of the Free Field and Verblunsky Coefficients}

\author{Mohammad Javad Latifi}
\email{mjlatifi@math.arizona.edu}

\author{Doug Pickrell}
\email{pickrell@math.arizona.edu}

\begin{abstract} An identity of Szego, and a volume calculation, heuristically suggest
a simple expression for
the distribution of Verblunsky coefficients
with respect to the (normalized) exponential of the $S^1$ trace of the Gaussian free field. This heuristic expression is not quite correct. A proof of the correct formula has been found by Chhaibi and Najnudel (\cite{CN}). Their proof uses random matrix theory and overcomes many difficult technical issues. In addition to presenting the Szego perspective, we show that the Chhaibi and Najnudel theorem implies a family of combinatorial identities (for moments of measures) which are of intrinsic interest.
\end{abstract}
\maketitle

\setcounter{section}{-1}

\section{Introduction}

Let $Prob(S^1)$ denote the metrizable compact convex set of probability measures on
the unit circle $S^1$ with the weak* topology, let $Prob'(S^1)$ denote the subset
consisting of measures with infinite support, and let $\Delta:=\{z\in\mathbb C:|z|<1\}$. The
Verblunsky correspondence refers to a miraculous homeomorphism
\begin{equation}\label{verblunskymap}Prob'(S^1) \leftrightarrow
\prod_{n=1}^{\infty}\Delta: \mu \leftrightarrow \alpha\end{equation}
or more generally to a homeomorphism of $Prob(S^1)$ and a compactification of the product space.
This correspondence can be described in several different ways. One formulation is as follows.
Given $\mu\in Prob'(S^1)$, if $p_0 =z^0$, $p_1(z)$, $p_2(z)$,... are the monic orthogonal polynomials corresponding
to $\mu$, then $\alpha_n =p_{n}(0)^*$ (where $(\cdot)^*$ is complex conjugation). Conversely, given $\alpha$, $\mu$ is the weak* limit of probability measures
\begin{equation}\label{verblunskymap2}d\mu= \lim_{N\to\infty}
\frac{\prod_{n=1}^{N}(1-\vert \alpha_n\vert^2)}{\vert p_N(z)\vert^2}\frac{d\theta}{2\pi}\end{equation}
where $p_0=1$,
\begin{equation}\label{Szego} p_{n}(z) = zp_{n-1}(z)+\alpha_n^*z^{n-1}p_{n-1}^{*}(z),\quad  n >0,\end{equation} and
$p_n^*(z)=p_n(\frac1{z^*})^*$ (our conventions for Verblunsky coefficients differ slightly from those in \cite{Simon2}, see Subsection \ref{notation}). In the text we will use a reformulation of Szego's recursion (\ref{Szego}) which relates Verblunsky coefficients to root subgroup coordinates for the loop group $LSU(1,1)$.

Suppose that $\mu\in Prob(S^1)$ and write
$$\mu=e^f \frac{d\theta}{2\pi}+\mu_{s}, \qquad f(z) \sim\sum_{n=-\infty}^{\infty}f_nz^n,\quad z\in S^1$$
where $\mu_{s}$ is perpendicular to the Lebesgue class. A famous theorem of Szego
asserts that if $\mu_{s}=0$, then
\begin{equation}\label{Szegoidentity}exp(-\sum_{n=1}^{\infty}n\vert f_n\vert^2)=\prod_{n=1}^{\infty}(1-\vert
\alpha_{n}\vert^2)^{n}\end{equation}
In Section \ref{RSFvolume} we will see that for finite $N$,
\begin{equation}\label{volume}\chi_{N}(f_1,...,f_N)\prod_{n=1}^N d\lambda(f_n)= \prod_{n=1}^N(1-|\alpha_n|^2)^{n-1} d\lambda(\alpha_n) \end{equation}
where $\chi_N$ is the characteristic function for the image of the map $(\alpha_1,...,\alpha_N)\to (f_1,...,f_N)$ and $\lambda$ denotes Lebesgue measure. Let
\begin{equation}\label{maindefinition}d\nu_{\beta}:=\prod_{n=1}^{\infty}\frac{n\beta}{\pi}e^{-n\beta\vert
f_n\vert^2}d\lambda(f_n)\end{equation}
At a heuristic level, the identities (\ref{Szegoidentity}) and (\ref{volume}) suggest that
\begin{equation}\label{mainheuristic} \chi_{\infty}(f_1,...) d\nu_{\beta}\stackrel{?}{=}\left(\prod_{n=1}^{\infty}\frac{\beta}{1+\beta}\right)\prod_{n=1}^{\infty}
\frac{nb}{\pi}(1-\vert\alpha_{n}\vert^2)^{nb-1}d\lambda(\alpha_{n})
\end{equation}
where there is a notable shift of exponent $b=1+\beta$, and $\chi_{\infty}$ is a conditioning on the left hand side (which, to give the story away, reduces the statement to $0=0$).

The measures $\nu_{\beta}$, $\beta>0$, can be
characterized in many different ways. For example they exhaust all of the conformally invariant Gaussian distributions for
$f_+\in H^0(\Delta)/\mathbb C$, where $f_+(z)=\sum_{n=1}^{\infty}f_nz^n$. In terms of boundary values, where $f=f_+^*+f_0+f_+$ is
interpreted as a real hyperfunction modulo constants, $\nu_{\beta}$ is the (normalized) $S^1$ trace of the Gaussian free field with inverse temperature $\beta$. From this latter point of view, a notorious complication is that the hyperfunction $f$ is $\nu_{\beta}$-almost surely not an ordinary function on the circle. Consequently the naive map in which a (normalized) real valued function $f$ is mapped to the Lebesgue class
probability measure $e^f\frac{d\theta}{2\pi}$ is defined on a set of $\nu_{\beta}$-measure zero. However, from various points of view, it is well-known how to make sense of the map from $f_+$ (or $f$) to a probability measure in a $\nu_{\beta}$-almost sure sense; we denote this map by $M$.
The following is a corollary of the work of
Chhaibi and Najnudel (\cite{CN}).

\begin{theorem}\label{maintheorem} Suppose that $\beta^2<2$. Then
$$\alpha_*M_*\left(\prod_{n=1}^{\infty}\frac{n\beta}{\pi}e^{-n\beta\vert
f_n\vert^2}d\lambda(f_n)\right)=\prod_{n=1}^{\infty}
\frac{n\beta}{\pi}(1-\vert\alpha_{n}\vert^2)^{n\beta-1}d\lambda(\alpha_{n}) $$
(where $(\cdot)_*$ denotes pushforward of measures).
\end{theorem}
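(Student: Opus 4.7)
The plan is to establish the identity by realizing both sides of Theorem \ref{maintheorem} as the law of the spectral measure at a cyclic vector of a common random unitary operator, following the random-matrix route of Chhaibi-Najnudel.

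First I would identify the right-hand side with the Killip-Nenciu / Bourgade-Najnudel-Nikeghbali description: the product measure $\prod_n \frac{n\beta}{\pi}(1-|\alpha_n|^2)^{n\beta-1}\,d\lambda(\alpha_n)$ is exactly the law of the Verblunsky coefficients of the spectral measure (at the cyclic vector $e_1$) of an infinite CMV matrix arising as a ``virtual'' thermodynamic limit of the Circular $\beta$ Ensemble. This reduces the theorem to showing that this limiting spectral measure, viewed as a random element of $Prob'(S^1)$, has the same law as $M_*\nu_\beta$.

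Second I would invoke the convergence of the modulus of the characteristic polynomial of C$\beta$E to Gaussian multiplicative chaos. By the work of Webb, Nikula-Saksman-Webb, and others, in the subcritical regime this modulus, suitably normalized, converges to $e^f$ where $f$ is the $S^1$ trace of the Gaussian free field at inverse temperature $\beta$, interpreted via GMC. Accordingly the spectral measure at the cyclic vector converges weak-$*$ in law to the random GMC measure $M(f)$, whose law is by definition $M_*\nu_\beta$. Combined with the previous step, this shows that the Verblunsky coefficients of $M(f)$ are distributed as the right-hand side, which is exactly the content of the theorem.

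The main obstacle is ensuring that weak-$*$ convergence of the spectral measures translates into coordinatewise convergence in law of the Verblunsky coefficients. The correspondence $\alpha$ is only a homeomorphism on $Prob'(S^1)$, so one must show that $M(f)$ lies in $Prob'(S^1)$ almost surely --- i.e.\ is non-trivial and non-atomic --- and that the finite-$N$ Verblunsky coefficients converge jointly in law to those of the limit. This is precisely where the restriction $\beta^2<2$ enters: it is the subcriticality threshold guaranteeing non-degeneracy and finite second moments of GMC, and it controls the tightness of the Verblunsky coefficients of the prelimit measures. Handling this joint convergence, and thereby explaining why the naive Szego-plus-volume heuristic (\ref{mainheuristic}) with exponent $b=1+\beta$ fails to survive the infinite-dimensional limit (being replaced by the exponent $\beta$), is the technical heart of the Chhaibi-Najnudel argument.
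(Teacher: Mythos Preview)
The paper does not itself prove Theorem \ref{maintheorem}: it explicitly presents the statement as ``a corollary of the work of Chhaibi and Najnudel'' and, rather than reproducing their argument, devotes its effort to showing that the theorem is equivalent (at the level of $x$-distributions, equation (\ref{maintheorem2})) to the family of moment identities stated as Theorem \ref{CNidentity}. The paper computes both sides of those identities but does not prove their equality; it again defers to \cite{CN} for that, and also for closing the gap between (\ref{maintheorem2}) and Theorem \ref{maintheorem} caused by the discontinuity of $M$.

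Your proposal, by contrast, sketches the \cite{CN} random-matrix argument directly: Killip--Nenciu for the right-hand side, convergence of the C$\beta$E characteristic polynomial to GMC for the left, and the continuity problem for the Verblunsky map as the technical crux. This is a faithful high-level summary of the route the paper cites, and you correctly locate the role of the subcriticality restriction. So your approach is consistent with the paper's attributed proof, but it is \emph{not} the paper's own contribution, which is the complementary combinatorial reformulation via moments. If your intent was to reproduce the paper's argument, you should instead reduce to the moment identities of Theorem \ref{CNidentity}; if your intent was to sketch the underlying \cite{CN} proof, your outline is reasonable though of course each step (especially the joint convergence of Verblunsky coefficients) hides substantial work that a full proof would have to supply.
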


Thus (\ref{mainheuristic}) does correctly predict (at a heuristic level) that Verblunsky coefficients are independent, but the shift in parameter in (\ref{mainheuristic}) is misleading! The restriction on $\beta$ is very likely artificial, but for lack of expertise, we do not see how to dispense with it.

To prove this theorem it would suffice to show that for given $N$, the two measures in Theorem \ref{maintheorem} have the same $(f_1,...,f_N)$ distribution. The $\nu_{\beta}$ distribution is
\begin{equation}\label{distribution}\prod_{n=1}^{N}\frac{n\beta}{\pi}e^{-n\beta\vert
f_n\vert^2}d\lambda(f_n)\end{equation}
Now consider $\mathbf N>N$ and the measure
$$\prod_{n=1}^{\mathbf N}
\frac{n\beta}{\pi}(1-\vert\alpha_{n}\vert^2)^{n\beta-1}d\lambda(\alpha_{n}) $$
Using the Szego and volume identities (\ref{Szegoidentity}) and (\ref{volume}), we can express this in terms of $f_1,...,f_{\mathbf N}$ as
$$\chi_{D_{\mathbf N}}(f_1,...,f_{\mathbf N})\left(\prod_{n=1}^{\mathbf N}\frac{n\beta}{\pi}\right) e^{-\sum_{n=1}^{\infty}n(1+\beta)\vert
f_n(f_1,...,f_{\mathbf N})\vert^2}\prod_{n=1}^{\mathbf N} d\lambda(f_n)$$
(where there is a straightforward but complicated way to express $f_n$ as a function of $f_1,...,f_{\mathbf N}$ for $n>\mathbf N$).
Unfortunately it is not clear, even at a heuristic level, how to integrate out the intermediate variables $f_{N+1},...,f_{\mathbf N}$ and evaluate the resulting distribution in the limit as $\mathbf N\to\infty$. Why is the limit conformally invariant? Why is it Gaussian? This is rather mysterious.

Modulo a number of important details, we will study the inverse of $M$, as realized in the following diagram:
\begin{equation}\label{vdiagram}\begin{matrix} H^0(\Delta,\mathbb C)/\mathbb C & \stackrel{M}{\rightarrow} & Prob(S^1)\\
\updownarrow &  & \updownarrow \\
H^0(\Delta, \mathbb C^{\times})/\mathbb C^{\times} & \leftarrow & \overline{\prod \Delta} \end{matrix} \qquad
\begin{matrix}f_+ & \stackrel{M}{\rightarrow} &\mu\\
\updownarrow & & \updownarrow\\
e^{-f_+}=1+\sum_1^{\infty}x_nz^n& \leftarrow & \alpha \end{matrix}
\end{equation}
(in a deterministic context, one must restrict the domains, see Section \ref{RSF}; in the present probabilistic context, the map $M$ and its inverse are defined in almost sure senses).
The first step in \cite{CN}, depending crucially on random matrix theory, is essentially to show that
\begin{equation}\label{maintheorem2}x_*(\nu_{\beta})
=x_*(\prod_{n=1}^{\infty}\frac{n\beta}{\pi}(1-|\alpha_n|^2)^{n\beta-1}d\lambda(\alpha_n))
\end{equation}
(where slightly abusing notation, on the left, $x:f_+ \to x=e^{-f_+}$, and on the right $x:\alpha \to x$). In terms of moments,
this is equivalent to a set of identities (see Theorem \ref{CNidentity}) which are of intrinsic interest. For example the equality of expected values for $x_nx_n^*$ is equivalent to
\begin{equation}\label{niceidentity}\sum \prod_{u=1}^L\frac{1}{(i(u)\beta+1)(j(u)\beta+1)}=\prod_{k=1}^{n}(\frac{1}{k}\beta^{-1}+\frac{k-1}{k})  \end{equation}
where the sum is over all integral sequences $i(1)>j(1)>...>i(L)>j(L)\ge 0$ satisfying $\sum_{u=1}^L(i(u)-j(u))=n$,
for some $1\le L\le n$. We do not have a conceptual explanation for the appearance of the probability mass function for a harmonic sequence of independent Bernoulli random variables.

At a heuristic level, the moment identities (which have the virtue of being computable), (\ref{maintheorem2}), and
Theorem \ref{maintheorem} are essentially equivalent; consequently the moment identities are well-worth understanding
(see \cite{LP}). However at a technical level, because of the non-continuous nature of the map $M$, there is a gap between (\ref{maintheorem2}) and Theorem \ref{maintheorem}; this is the void which \cite{CN} has filled. It would be highly desirable to understand this using softer techniques.

\subsection{Plan of the Paper}

The first three sections recall standard facts, albeit in slightly idiosyncratic ways. In Section \ref{Conformally Invariant Gaussians} we recall why $\nu_{\beta}$ is the essentially unique conformally invariant Gaussian probability measure on $H^0(\Delta)/\mathbb C$, and why it is necessary to regularize the map $M$. In Section \ref{RSF} we show that Verblunsky coefficients are related to root subgroup coordinates for the loop group of $SU(1,1)$, and we explain the deterministic meaning of the diagram (\ref{vdiagram}). In Section \ref{Mdefinition} we explain how to properly formulate the map $M$. In Section \ref{RSFvolume} we prove the volume formula (\ref{volume}). In Sections \ref{proof1} and \ref{proof2} we calculate moments for the coefficients of $x$ with respect to the $f_+$ and $\alpha$-distributions (in Theorem \ref{maintheorem}), respectively. (\ref{maintheorem2}) is equivalent to the equality of these two calculations, as formulated in Theorem \ref{CNidentity}. In \cite{LP} our goal is to give direct proofs of these identities, but as of this writing, this remains a goal.

\subsection{Notation}\label{notation}

Our conventions regarding Verblunsky coefficients differ from those in \cite{Simon2}, where the nontrivial coefficients are indexed by $n=0,1,...$.
We use the convention that $\alpha_0=1$ and the nontrivial Verblunsky coefficients are $\alpha_1,\alpha_2,...$. The rationale is that the group of rotations of the circle acts naturally on $Prob(S^1)$, and the map to Verblunsky coefficients is equivariant provided that rotation by $\theta$ acts on $\alpha$ (in our notation) by $(\alpha_n) \to (e^{-in\theta}\alpha_n)$. We also switch the sign of the coefficients. Thus our $\alpha_1$ is the negative of the $0$th coefficient in \cite{Simon2}. This eliminates a profusion of signs that would appear otherwise.

\section{Background}\label{Conformally Invariant Gaussians}

The group $PSU(1,1)=\{g=\pm\left(\begin{matrix} a&b\\
\bar {b}&\bar {a}\end{matrix} \right):\vert a\vert^2-\vert b\vert^2
=1\}$ acts
on the open unit disk $\Delta\subset \mathbb C$ by linear fractional
transformations,
\begin{equation}\label{0.1}g:z\to\frac {az+b}{\bar {b}z+\bar {a}}\end{equation}
This identifies $PSU(1,1)$ with the group of all conformal
automorphisms of $\Delta$, or equivalently with the group of
all orientation-preserving isometries of $\Delta$, equipped with
the non-Euclidean arclength $ds=\frac {\vert dz\vert}{1-\vert z\vert^
2}$. For this metric the Gaussian
curvature $=-4$, and the global metric on $\Delta$ is given by
\begin{equation}\label{nonEmetric}d(z,w)=arctanh(|\frac {z-w}{1-\bar {z}w}|)\end{equation}

The action of $PSU(1,1)$ on $\Delta$ induces an action (by pullback) on $H^m(\Delta)$, the space of holomorphic
differentials of order $m$, for $m=0,1,...$. The action
of $g\in PSU(1,1)$ on $H^m$ is
\begin{equation}g:F(z)(dz)^m\to F(\frac {\bar {a}z-b}{-\bar {b}z+a})(-
\bar {b}z+a)^{-2m}(dz)^m\label{0.5}\end{equation}
The space $H^m$ is a Frechet space with respect to the
topology of uniform convergence on compact sets, and the action $(\ref{0.5})$ is continuous.
More generally the universal covering $\widetilde{PSU(1,1)}$ acts on $H^m(\Delta)$ for any $m\ge 0$.

For each $m\ge 0$, the action $\widetilde {PSU(1,1)}\times H^m(\Delta )$ contains an essentially unique
irreducible unitary action (This is proven by considering the (lowest weight) infinitesimal action of the Lie algebra
of $PSU(1,1)$, see e.g. \cite{Lang}). If $m> 1/2$,
then the invariant Hilbert norm is given by an integral
\begin{equation}\label{Hilbertnorm}\vert\vert F(z)(dz)^m\vert\vert^2=\frac {2m-1}{2\pi}\int_{\Delta}
\vert F(z)\vert^2(1-\vert z\vert^2)^{2m-2}dx\wedge dy=\sum_{n\ge 0}\vert F_{n+m}\vert^2B(n+1,2m)\end{equation}
where $F(z)(dz)^m=\sum_{n\ge 0}F_{n+m}z^n (dz)^m$ and
$B(n+1,2m)$ is the Beta function.  We inserted the seemingly
unnatural factor $\frac {2m-1}{2\pi}$
because the last sum shows that this norm
can be analytically continued to $m>0$, since $B(n+1,2m)>0$.

\begin{proposition} Fix $m>0$. For each $\beta>0$, $\nu_{\beta}$ is an ergodic $PSU(1,1)$-invariant Gaussian measure. Conversely, if $\nu$ is an ergodic conformally invariant Gaussian on $H^m(\Delta)$, then $\nu=\nu_{\beta}$ for some $\beta>0$.
\end{proposition}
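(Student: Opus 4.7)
The key structural input is that $H^m(\Delta)$, equipped with the invariant Hilbert norm from (\ref{Hilbertnorm}), is an irreducible unitary representation of $\widetilde{PSU(1,1)}$, as noted in the paragraph preceding the proposition. Both directions of the proposition reduce to careful applications of this fact combined with standard Gaussian techniques.

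For the forward direction, one first observes that $\nu_\beta$, diagonalized in the Fourier expansion $F=\sum F_{n+m}z^n(dz)^m$, is precisely the centered complex Gaussian whose covariance quadratic form equals $\beta^{-1}$ times the invariant inner product (\ref{Hilbertnorm}). Its characteristic functional $G\mapsto\exp(-\|G\|^2/(4\beta))$ depends only on $\|G\|^2$, hence is manifestly $PSU(1,1)$-invariant, establishing invariance of $\nu_\beta$. For ergodicity, I would pick a one-parameter hyperbolic subgroup $\{g_t\}\subset PSU(1,1)$; the induced unitary flow on $H^m(\Delta)$ has purely continuous spectrum, a standard property of the lowest-weight unitary representations that can be read off from the Plancherel decomposition. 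A classical Wiener-chaos computation then shows that $\{g_t\}$ acts as a mixing flow on $(H^m(\Delta),\nu_\beta)$, and mixing of a subgroup is more than enough to conclude ergodicity of the whole group.

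For the converse, let $\nu$ be a conformally invariant Gaussian on $H^m(\Delta)$. Since the representation is irreducible and nontrivial (using $m>0$), it admits no nonzero fixed vector, so the mean of $\nu$ vanishes. The covariance of $\nu$, viewed as a positive Hermitian sesquilinear form on a sufficiently dense test subspace, is $PSU(1,1)$-invariant, so the associated self-adjoint operator on $H^m(\Delta)$ commutes with the irreducible unitary action. Schur's lemma then forces this operator to be a scalar $\beta^{-1}I$; since centered Gaussians are determined by their covariance, $\nu=\nu_\beta$. The main obstacle I anticipate is precisely this final Schur step: the covariance of an infinite-dimensional Gaussian is typically neither bounded nor trace-class as an operator on $H^m(\Delta)$, so one must either apply Schur to the bounded operators arising from its spectral projections (deducing that the spectral measure is a single Dirac mass) or interpret the covariance as a distributional kernel between a Gelfand triple and invoke Schur for the resulting continuous intertwiner. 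Ergodicity is used only to exclude convex combinations, which in any event are ruled out by the Gaussian hypothesis once Schur is in hand.
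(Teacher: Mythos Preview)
Your argument is correct in outline and close in spirit to the paper's, but there is a worthwhile difference in the converse direction. The paper does not apply Schur's lemma to the covariance operator; instead it invokes the uniqueness of the $PSU(1,1)$-invariant Hilbert substructure on $H^m(\Delta)$ to pin down the Cameron--Martin space of $\nu$ (up to the scalar $\beta$), and then uses that a centered Gaussian is determined by its Cameron--Martin space. This sidesteps exactly the obstacle you flagged: the Cameron--Martin space is always a genuine Hilbert space sitting inside $H^m(\Delta)$ on which $PSU(1,1)$ acts unitarily, so no Gelfand-triple or spectral-projection workaround is needed. Your Schur-on-covariance route reaches the same conclusion, but only after the extra technical care you describe; the Cameron--Martin formulation is the cleaner packaging of the same irreducibility input.

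For ergodicity, the paper simply cites Segal's theorem that a Gaussian measure is ergodic under an orthogonal action with no nonzero fixed vectors. Your hyperbolic-flow mixing argument is a valid and more explicit alternative; it buys a stronger conclusion (mixing rather than mere ergodicity) at the cost of checking the continuous-spectrum claim for the specific representation.
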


\begin{proof}The ergodicity of $\nu_{\beta}$ is a special case of a general result of Irving Segal, see \cite{ISegal}.
The second statement follows from the fact that the unitary substructure for the action $PSU(1,1)$ on $H^m(\Delta)$ is essentially unique and irreducible, see e.g. \cite{Lang}. This determines the Cameron-Martin subspace for the Gaussian, hence determines the measure. For a completely different geometric proof, see \cite{Sodin}. \end{proof}

We are primarily interested in the limit $m\downarrow 0$, which is exceptional. The action of $PSU(1,1)$ on $H^0$ is
reducible: $\mathbb C$ is an invariant subspace. The norm
as defined by (\ref{Hilbertnorm}) is not well-defined when $m=0$.
However, in the definition of the norm, we can multiply
by $m$.  In this case we obtain a Hilbert space
substructure for the quotient $H^0/\mathbb C$ (the original vacuum,
the constant $1$, is now a `ghost'), and an isometry induced by the $PSU(1,1)$-equivariant Frechet space isomorphism
\begin{equation}H^0(\Delta)/\mathbb C \stackrel{\partial}{\rightarrow} H(\Delta)^1:f_+\to\Theta :=\partial
f_+.\label{1.9}\end{equation}
To set our notation,
for $\Theta=\sum_{n\ge 0} \theta_{n+1}z^n dz\in H^1(\Delta)$
$$\langle\Theta,\Theta\rangle=\frac{1}{2\pi i}\int \Theta\wedge \bar \Theta=\frac{1}{2\pi}\sum_{n=0}^{\infty}\frac{1}{n+1}|\theta_{n+1}|^2 $$
If $\Theta=\partial f_+$, then $\theta_n=nf_n$, $n=1,2,...$

Define $\nu_{\beta}$ as in (\ref{maindefinition}). If $t=\frac{1}{\beta}$, then $t\to \nu_{\beta}$ is the
unique conformally invariant convolution semigroup of Gaussian measures on the spaces in
(\ref{1.9}). The almost sure properties of the associated random functions are
considered in detail in \cite{K}, which we merely note for motivation.
As random series $\Theta=\sqrt{\beta}\sum (n+1)^{1/2}Z_nz^n$, and
\begin{equation}f_+(z)=\int_0^z\Theta =\sum_1^{\infty}{n}^{-1/2}Z_nz^n.\label{ 1.14}\end{equation}
where $Z_n$ are standard normal complex random variables.
This is the critical case in chapter 13 of \cite{K}.

\begin{proposition}\label{K} (a) Fix an angle $\alpha$.  For all $a\in \mathbb C$, almost surely
$$\liminf_{r\uparrow 1}\vert f_+(re^{i\alpha})-a\vert =0,$$
i.e. for each ray the $f_+$ image is almost surely dense in $\mathbb C$.

(b) Again fix the angle $\alpha$.  Then almost surely
$$f_+(re^{i\alpha})=O(\sqrt {\rho (r)ln(\frac {\rho (\sqrt r)}{1-r}
)})\quad as\quad r\uparrow 1,$$
where $\rho (r)=-ln(1-r^2)$.

\end{proposition}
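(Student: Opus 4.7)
The plan is to follow the classical theory of boundary behavior of random Taylor series developed in Chapter 13 of Kahane \cite{K}. By the rotational invariance in distribution of the standard complex Gaussians $Z_n$, we may replace $e^{i\alpha}$ by $1$ and study the complex Gaussian process
\begin{equation*}
X(r) := f_+(r) = \sum_{n=1}^{\infty} n^{-1/2} Z_n r^n, \qquad 0 \le r < 1,
\end{equation*}
whose covariance is
\begin{equation*}
\mathbb{E}\bigl[X(r_1)\overline{X(r_2)}\bigr] = \sum_{n=1}^{\infty} \frac{(r_1 r_2)^n}{n} = -\ln(1 - r_1 r_2).
\end{equation*}
In particular $\mathbb{E}|X(r)|^2 = \rho(r) \to \infty$ as $r \uparrow 1$.

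For part (a), the plan is to introduce the time change $\tau = -\ln(1-r^2)$, so that $W(\tau) := X(r(\tau))$ is a centered complex Gaussian with $\mathbb{E}|W(\tau)|^2 = \tau$. The covariance $-\ln\bigl(1 - \sqrt{(1-e^{-s})(1-e^{-t})}\bigr)$ behaves like $\min(s,t)$ for large times, reducing the question to a recurrence property reminiscent of planar Brownian motion. The key step is to establish, for each fixed $a \in \mathbb{C}$, that $\liminf_{\tau \to \infty} |W(\tau) - a| = 0$ almost surely. One route is to decompose $W$ as a planar Brownian motion plus a well-controlled Gaussian remainder (via a Karhunen--Lo\`eve analysis of the logarithmic kernel), then invoke the neighborhood recurrence of planar Brownian motion. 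The relevant event is tail-measurable in $(Z_n)$, so Kolmogorov's 0--1 law automatically promotes positive probability to probability one. A countable-intersection argument over $a$ in a dense subset of $\mathbb{C}$, together with continuity of $W$, upgrades to the simultaneous statement for all $a$.

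For part (b), the plan is a standard Gaussian maximal inequality. Decompose $[0,1)$ into dyadic shells $I_k = [1-2^{-k}, 1-2^{-k-1})$. On each $I_k$ the Gaussian process $X$ has variance of order $k$ and an intrinsic $L^2$-metric whose Dudley entropy integral is controlled by $\sqrt{k}$ up to logarithmic factors. Borell's concentration inequality then yields, for each shell, a sub-Gaussian tail bound on $\sup_{I_k}|X|$. Summing these bounds via Borel--Cantelli over shells up to the level $k(r) \asymp \log(1/(1-r))$ produces the stated estimate: the factor $\rho(r)$ is the ambient variance, while the extra $\ln(\rho(\sqrt r)/(1-r))$ inside the square root is a union-bound correction over the relevant shells and chaining scales.

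The main technical obstacle is the Brownian comparison in part (a). The process $W$ is not Markovian, and its logarithmic covariance differs from that of complex Brownian motion by a nontrivial long-range correction, so extracting a clean coupling is delicate; alternatives include a direct Paley--Zygmund argument on annular level sets, or the more refined moment methods used by Kahane. Part (b) is comparatively routine given the standard Gaussian process toolkit. Since these results are established in detail in \cite{K} and are invoked here only for motivation, we record the strategy but defer the full proof to that reference.
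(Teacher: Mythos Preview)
The paper does not supply a proof of this proposition at all: it is stated without proof immediately after the sentence ``The almost sure properties of the associated random functions are considered in detail in \cite{K}, which we merely note for motivation,'' and is followed directly by a question. Your proposal is therefore already more than the paper offers, and your final sentence---deferring the full argument to \cite{K}---matches exactly what the paper does.

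That said, since you went further and sketched an argument, a brief comment on the sketch: your outline for (b) via dyadic shells, Dudley entropy, and Borel--Cantelli is the standard route and is essentially what Kahane does. For (a), your plan to couple $W(\tau)$ to planar Brownian motion is natural given the covariance $-\ln(1-r_1r_2)$, but as you yourself flag, the process is non-Markovian and the remainder after subtracting a Brownian piece is not obviously small enough to preserve neighborhood recurrence; Kahane's treatment in Chapter 13 proceeds somewhat differently (closer to your ``Paley--Zygmund on annular level sets'' alternative). None of this affects the comparison with the paper, which simply cites the reference.
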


\begin{question}\label{K2} Is (a) true for any ergodic conformally invariant $\nu \in Prob(H^0(\Delta)/\mathbb C)$, $\nu\ne \delta_0$?

\end{question}

\section{Root Subgroup Coordinates}\label{RSF}

In this section we slightly reformulate the Szego recursion and observe that Verblunsky coefficients are related to so called root subgroup coordinates for loops into $SU(1,1)$. This reformulation is surely not new. In fact the point is that in developing the theory of root subgroup factorization, the second author should have observed early on that some aspects had already been developed in the Verblunsky context.

\subsection{Reformulation of the Szego Recursion}

The Szego recursion can be written as
$$\left(\begin{matrix}p_n\\p_n^*\end{matrix}\right)=\left(\begin{matrix}z&\alpha_n^*z^{n-1}\\\alpha_nz^{-n+1} & z^{-1} \end{matrix}\right)
\left(\begin{matrix}p_{n-1}\\p_{n-1}^*\end{matrix}\right)$$
Consequently there is a closed formula
$$\left(\begin{matrix}p_n\\p_n^*\end{matrix}\right)=\prod_{k=1}^{\stackrel{n}{\leftarrow}}\left(\left(\begin{matrix}1&\alpha_k^*z^{k}\\\alpha_kz^{-k} & 1 \end{matrix}\right)\left(\begin{matrix}z&0\\0 & z^{-1} \end{matrix}\right)\right)
\left(\begin{matrix}1\\1\end{matrix}\right)$$
This does not have any meaning in the limit as $n\to\infty$.

Define the $n$th reversed polynomial to be $r_n(z)=z^np_n^*(z)=z^n\overline{p_n(\frac{1}{\overline z})}$.
Note that $r_n(0)=1$ and (\ref{verblunskymap2}) is equivalent to
\begin{equation}\label{verblunskymap3}\mu= \lim_{N\to\infty}
\frac{\prod_{n=1}^{N}(1-\vert \alpha_n\vert^2)}{\vert r_N(z)\vert^2}\frac{d\theta}{2\pi}\end{equation}
because on the circle $|r_N|=|p_N|$. The point of what follows is that we can take limits for the $r_N$.

\begin{lemma}\label{szegorecursion}In terms of reversed polynomials, the Szego recursion (\ref{Szego}) is equivalent to
$$\left(\begin{matrix}r_n^*\\r_n\end{matrix}\right)=
\left(\begin{matrix}1&\alpha_n^*z^{-n}\\ \alpha_nz^{n} & 1 \end{matrix}\right)\left(\begin{matrix}r_{n-1}^*\\r_{n-1}\end{matrix}\right)
=\prod_{k=1}^{\stackrel{n}{\leftarrow}}\left(\begin{matrix}1&\alpha_k^*z^{-k}\\\alpha_kz^{k} & 1 \end{matrix}\right)
\left(\begin{matrix}1\\1\end{matrix}\right)$$
\end{lemma}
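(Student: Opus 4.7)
The plan is to prove the lemma by direct substitution, exploiting the fact that the $*$ operation (sending $p(z)$ to $\overline{p(1/\overline{z})}$) is an involution: the Szego recursion~(\ref{Szego}) together with its $*$-conjugate give two scalar recursions that encode, row by row, the claimed matrix identity.

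First I would set up the dictionary between $(r_n^*, r_n)$ and $(p_n, p_n^*)$. One side is given by definition: $r_n(z) = z^n p_n^*(z)$. The companion identity $r_n^*(z) = z^{-n} p_n(z)$ falls out of a short computation of $\overline{r_n(1/\overline{z})}$ using $(p_n^*)^* = p_n$, i.e.\ $\overline{p_n^*(1/\overline{z})} = p_n(z)$.

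Then I would verify the two rows separately. For the top row, the asserted identity $r_n^* = r_{n-1}^* + \alpha_n^* z^{-n} r_{n-1}$ becomes, after inserting the dictionary and multiplying by $z^n$, exactly $p_n = z p_{n-1} + \alpha_n^* z^{n-1} p_{n-1}^*$, which is (\ref{Szego}) on the nose. For the bottom row, $r_n = \alpha_n z^n r_{n-1}^* + r_{n-1}$ becomes, after the same substitution and division by $z^n$, the identity $p_n^* = z^{-1} p_{n-1}^* + \alpha_n z^{-(n-1)} p_{n-1}$; this is the image of (\ref{Szego}) under $*$ and is therefore automatic. The iterated product formula then follows by induction on $n$ starting from the base case $(r_0^*, r_0)^T = (1,1)^T$, which holds because $p_0 = p_0^* \equiv 1$.

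The main obstacle is really only careful bookkeeping of $*$-conjugates and powers of $z$; there is no conceptual depth beyond the observation that the Szego step can be repackaged as multiplication by a factor of the form $\bigl(\begin{smallmatrix} 1 & \alpha_n^* z^{-n} \\ \alpha_n z^n & 1 \end{smallmatrix}\bigr)$. Unlike the product written just before the lemma statement, this reformulated product has a chance of converging as $n\to\infty$ (the point flagged by the authors), which is exactly why the reformulation is worth isolating: it sets up the identification of Verblunsky coefficients with root subgroup coordinates for the loop group of $SU(1,1)$ developed in the remainder of the section.
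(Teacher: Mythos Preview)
Your proposal is correct and is exactly the routine verification the paper has in mind; the paper does not give a proof of this lemma, treating it as a direct consequence of the matrix form of the Szego recursion displayed just before the lemma statement. Your dictionary $r_n=z^n p_n^*$, $r_n^*=z^{-n}p_n$ and the row-by-row check are precisely the substitution that turns that displayed matrix recursion into the one in the lemma.
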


Note that
$$g_2:=\prod_{k=1}^{\stackrel{n}{\leftarrow}}(1-|\alpha_k|^2)^{-1/2}\left(\begin{matrix}1&\alpha_k^*z^{-k}\\ \alpha_kz^{k} & 1 \end{matrix}\right)$$
is a loop with values in $SU(1,1)$. A loop of this form has two properties. First, it has the form
$$g_2= \left(\begin{matrix}d_2^*&c_2^*\\c_2&d_2\end{matrix}\right)$$
where $c_2$ and $d_2$ are polynomials of degree $\le n$ satisfying $c_2(0)=0$ and $d_2(0)=1$. Second, it has a triangular
factorization (essentially a Riemann-Hilbert factorization)
$$g_2(\zeta)=\left(\begin{matrix}1&\bf x^*\\0&1\end{matrix}\right)\left(\begin{matrix}\mathbf a_2&0\\0&\mathbf a_2^{-1}\end{matrix}\right)\left(\begin{matrix}\alpha_2&\beta_2\\\gamma_2&\delta_2\end{matrix}\right) $$
where the first matrix is holomorphic in $\Delta^*$ and $=1$ at $\infty$, $\mathbf a_2$ is a positive constant, and the third matrix is a holomorphic map $\overline{\Delta} \to SL(2,\mathbb C))$ and unipotent upper triangular at $z=0$. These two properties are equivalent (see \cite{CP}).

Instead of $g_2$ (and its root subgroup factorization), we are more interested in the matrix (and its root subgroup factorization) which appears in the Szego recursion
$$\left(\begin{matrix}\delta_2^*&\gamma_2^*\\\gamma_2&\delta_2\end{matrix}\right)
=:\prod_{j=1}^{\stackrel{n}{\leftarrow}}\left(\begin{matrix}1&\alpha_j^*z^{-j}\\ \alpha_jz^{j} & 1 \end{matrix}\right) $$
This is $g_2$, except that we have dropped the numerical factors which enforce the algebraic condition $det(g_2)=1$ (which is an implicit regularization). Note that in Lemma \ref{szegorecursion}
\begin{equation}\label{xnotation} r_n=\gamma_2+\delta_2:=1+\sum_{k=1}^n x_kz^k \end{equation}
Whereas the zeroes of $p_n$ are in $\Delta$, the zeroes of $r_n$
are in $\Delta^*$, the complement of the closed unit disk. This implies
$f_+=-log(r_n)\in H^0(\Delta)$, because for a Verblunsky sequence with $\alpha_j=0$, $j>n$, $r_{n+i}=r_n$, $i\ge 0$, hence the limit in (\ref{verblunskymap3}) is expressed exactly in terms of $r_n$. To summarize, for Verblunsky sequences with a finite number of nonvanishing terms, the deterministic sense in which the diagram (\ref{vdiagram}) inverts $M$ is that there are bijective correspondences
\begin{equation}\label{vdiagramfin}
\begin{matrix} f_+=r_n &\leftrightarrow & \mu(\alpha)=\frac{\prod_{k=1}^n(1-|\alpha_k|^2)}{|r_n(e^{i\theta})|^2}\frac{d\theta}{2\pi}\\
\updownarrow & & \updownarrow\\
x=e^{-r_n} & \leftrightarrow & \alpha=(\alpha_1,...,\alpha_n,0,...) \end{matrix}
\end{equation}
By taking limits (for functions and measures having smooth boundary values), this implies

\begin{proposition}\label{classicalvdiagram} The diagram (\ref{vdiagram}) inverts $M$ when the $f_+$ and $x$'s are smooth up to the boundary, the $\alpha$ sequences are rapidly decreasing, and the measures $\mu$ have smooth positive densities on the circle.
\end{proposition}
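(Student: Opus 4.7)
The plan is to prove the proposition by approximating with finite-support Verblunsky sequences and invoking (\ref{vdiagramfin}) at each truncation. Start with $\mu$ having smooth positive density $w$ on $S^1$; a classical theorem (the $C^\infty$ analogue of Baxter's) then ensures the Verblunsky sequence $\alpha$ is rapidly decreasing, and conversely. Truncate to $\alpha^{(n)} := (\alpha_1,\dots,\alpha_n,0,\dots)$ and consider the quadruple $(\mu^{(n)}, r_n, f_+^{(n)}, x^{(n)})$ supplied by (\ref{vdiagramfin}). It suffices to show that each entry converges to its infinite-sequence counterpart and that the limiting $f_+$ satisfies $M(f_+) = \mu$.

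The key analytic input is Szego's theorem on boundary asymptotics of orthogonal polynomials: for smooth positive $w$, the orthonormal $\varphi_n^*$ converge uniformly on $\overline{\Delta}$ to $1/D$, where $D$ is the Szego outer function determined by $|D|^2 = w$ on $S^1$ and $D(0) > 0$. Combined with the norm identity $\|p_n\|^2 = \prod_{k=1}^n(1-|\alpha_k|^2)^{-1}$ and Szego's classical identity $D(0)^2 = \prod_{k=1}^\infty(1-|\alpha_k|^2)$, this yields uniform convergence $r_n(z) \to D(0)/D(z)$ on $\overline{\Delta}$, with the limit bounded away from zero. Consequently $f_+^{(n)} := -\log r_n$ converges uniformly to $f_+ := \log(D/D(0)) \in H^0(\Delta)/\mathbb C$, and (by Cauchy's formula on any contour around $0$ in $\Delta$) the coefficients $x_k^{(n)}$ of $r_n = e^{-f_+^{(n)}}$ converge to those of $e^{-f_+}$, realizing the passage $\alpha \to x \to f_+$ of (\ref{vdiagram}) in the limit.

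To close the diagram, I would verify $M(f_+) = \mu$ by a direct computation on $S^1$. Since $|r_\infty|^2 = D(0)^2/w$ on $S^1$, one has $e^{f_+ + f_+^*} = w/D(0)^2$, which integrates against $d\theta/(2\pi)$ to $1/D(0)^2$; by Szego's identity the normalization constant $D(0)^2 = \prod(1-|\alpha_k|^2)$ matches precisely the prefactor appearing in the limit of (\ref{verblunskymap3}). Thus the normalized Lebesgue-class probability measure $M(f_+)$ equals $w\,d\theta/(2\pi) = \mu$, and the diagram commutes in the limit.

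The main technical obstacle is securing uniform convergence $r_n \to D(0)/D$ on the \emph{closed} disk together with uniform nonvanishing of the limit: this is what makes $\log r_n$ well-defined uniformly in $n$ and convergent in $H^0(\Delta)/\mathbb C$ rather than merely on compact subsets of $\Delta$. The required version of Szego-Baxter-type asymptotics is standard under the smoothness / rapid-decay hypothesis, but it needs to be invoked with some care to produce convergence in the appropriate Fr\'echet topology (in particular, giving $f_+$ smooth up to the boundary). Once this is in hand, weak-$*$ convergence $\mu^{(n)} \to \mu$ and stabilization of $x^{(n)}$ are immediate consequences of (\ref{verblunskymap3}) and (\ref{xnotation}), and the approximation argument concludes.
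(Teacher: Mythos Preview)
Your approach is exactly the one the paper indicates: the paper's entire argument is the single clause ``By taking limits (for functions and measures having smooth boundary values)'' applied to (\ref{vdiagramfin}), and your proposal is a correct fleshing-out of that limit via Szeg\H{o}--Baxter asymptotics for $r_n$. One small slip: for the \emph{monic} polynomials $p_n$ used here the norm identity is $\|p_n\|^2 = \prod_{k=1}^n(1-|\alpha_k|^2)$, not its reciprocal, though your conclusion $r_n \to D(0)/D$ is unaffected.
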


The Szego identity (\ref{Szego}) implies that more generally the diagram is valid for $f_+$ and $x$ with $\sum_n n|f_n|^2<\infty$ and $\alpha$ such that $\sum_n n|\alpha_n|^2<\infty$ (There are also deterministic statements which apply to $l^2$ sequences, such as the `Szego condition', see \cite{Simon1} and \cite{Simon2}). However none of this
is applicable for
the random $f_+$ that occur in Theorem \ref{maintheorem}.

For the purposes of this paper, it is crucial that there are simple explicit expressions for the coefficients of $x$ in terms of $\alpha$. By convention, in the following statement, $\alpha_0=1$. The following is a straightforward calculation.

\begin{lemma}\label{volumelemma1}For rapidly decreasing sequences of Verblunsky coefficients, $x:=\gamma_2+\delta_2$, and $n=1,2,...$, $x_n$ is the sum of terms of the form
$$\alpha_{i(1)}\alpha_{j(1)}^*...\alpha_{i(L)}\alpha_{j(L)}^* $$
for some $L$, where the indices satisfy the constraints
$$i(1)>j(1)>i(2)>...>i(L)>j(L)\ge 0 \text{  and  }\sum_{u=1}^L(i(u)-j(u))=n$$
\end{lemma}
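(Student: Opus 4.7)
The plan is to expand the explicit matrix product from Lemma \ref{szegorecursion} directly. Writing each factor as $A_k = I + B_k$ with
$$B_k = \begin{pmatrix} 0 & \alpha_k^* z^{-k} \\ \alpha_k z^k & 0 \end{pmatrix},$$
the product distributes as
$$\begin{pmatrix} \delta_2^* & \gamma_2^* \\ \gamma_2 & \delta_2 \end{pmatrix} = A_n A_{n-1} \cdots A_1 = \sum_{S} B_{s_1} B_{s_2} \cdots B_{s_L},$$
where $S = \{s_1 > s_2 > \cdots > s_L\}$ ranges over subsets of $\{1, \ldots, n\}$ (the empty set contributing the identity, which accounts for the constant term $1$ in $\delta_2$). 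Since each $B_k$ is off-diagonal, consecutive pairs become diagonal, and a direct computation gives
$$B_a B_b = \begin{pmatrix} \alpha_a^* \alpha_b z^{b-a} & 0 \\ 0 & \alpha_a \alpha_b^* z^{a-b} \end{pmatrix}.$$

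For $L$ even I would group the factors into $L/2$ consecutive pairs; the product is diagonal with $(2,2)$-entry
$$\alpha_{s_1} \alpha_{s_2}^* \alpha_{s_3} \alpha_{s_4}^* \cdots \alpha_{s_{L-1}} \alpha_{s_L}^* \, z^{(s_1 - s_2) + (s_3 - s_4) + \cdots + (s_{L-1} - s_L)},$$
contributing to $\delta_2$. Writing $i(u) = s_{2u-1}$ and $j(u) = s_{2u}$, this yields precisely the terms indexed by sequences with $j(L/2) \geq 1$. For $L$ odd, multiplying the $(L-1)$-fold diagonal product on the right by the remaining off-diagonal $B_{s_L}$ produces a $(2,1)$-entry
$$\alpha_{s_1} \alpha_{s_2}^* \cdots \alpha_{s_{L-1}}^* \alpha_{s_L} \, z^{s_1 - s_2 + \cdots - s_{L-1} + s_L},$$
contributing to $\gamma_2$.

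The final step is to unify the two cases using the stated convention $\alpha_0 = 1$: appending a factor $\alpha_0^*$ to each odd-length term lets me reinterpret it as an even-length term with final index $j = 0$, replacing the strict constraint $j(L) \geq 1$ by $j(L) \geq 0$. Collecting coefficients of $z^n$ in $x = \gamma_2 + \delta_2$ then produces exactly the stated sum over strictly decreasing sequences $i(1) > j(1) > \cdots > i(L) > j(L) \geq 0$ with $\sum_u (i(u) - j(u)) = n$. The rapidly decreasing hypothesis plays no combinatorial role here: for each fixed $n$ the constraint $\sum_u (i(u) - j(u)) = n$ with strictly decreasing positive gaps already bounds the admissible indices, so the coefficient of $z^n$ is a finite sum regardless of whether the Verblunsky sequence is finite. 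The only real hurdle is bookkeeping, namely tracking the alternation of $\alpha$ and $\alpha^*$ factors and the parity of $L$; there is no analytic content.
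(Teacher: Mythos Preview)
Your expansion of the matrix product is correct and is exactly the ``straightforward calculation'' the paper has in mind (the paper gives no proof beyond that phrase). The grouping into pairs $B_aB_b$, the parity split between $\delta_2$ and $\gamma_2$, and the use of the convention $\alpha_0=1$ to absorb the odd-length case are all right.

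There is, however, a genuine error in your closing remark. You assert that the constraint $\sum_u(i(u)-j(u))=n$ with strictly decreasing indices already bounds the admissible indices, so that the coefficient of $z^n$ is a finite sum. This is false: the constraint bounds the \emph{length} $L\le n$ (since each gap $i(u)-j(u)\ge 1$), but not the indices themselves. The paper's own example immediately following the lemma,
\[
x_1=\alpha_1+\alpha_2\alpha_1^*+\alpha_3\alpha_2^*+\cdots,
\]
is an infinite series. What your computation actually establishes is the formula for the \emph{finite} product $A_N A_{N-1}\cdots A_1$, whose $z^n$-coefficient is the partial sum over index sequences with $i(1)\le N$. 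The rapidly decreasing hypothesis is not cosmetic: it is exactly what guarantees convergence as $N\to\infty$ (and also what lets you identify the limit of $\gamma_2+\delta_2$ with the $x$ coming from $\mu$ via the diagram (\ref{vdiagram})). You should replace the last paragraph with a brief remark that passing to the limit $N\to\infty$ requires the decay hypothesis.
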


For example
$$x_1=\alpha_1+\alpha_2\alpha_1^*+\alpha_3\alpha_2^*+... $$
In general each $x_n$ is a multilinear function of the $\alpha_j$ and their conjugates. By contrast
each $x_n$ is a polynomial in the coefficients $f_j$ of $f_+\in H^0(\Delta,\mathbb C)/\mathbb C$.

The following definition is intended to capture basic intuition about the sum defining $x_n$ in terms of $\alpha$.

\begin{definition}\label{bulk/boundary} The bulk terms in the sum for $x_n$ in Lemma \ref{volumelemma1} are those for which the all the gaps $i(u)-j(u)=1$, i.e. those for which the length $L=n$. Terms for which $L<n$ are referred to as boundary terms. We refer to a pair $i(u)>j(u)=i(u)-1$ (with gap 1) as a 2-bit.  In general we refer to a pair $i(u)>j(u)=i(u)-m$ as an $m$-bit.
\end{definition}

Note that boundary terms arise when two or more 2-bits collide to form an $m$-bit, where $m>2$. For example the $2$-bits $\alpha_2\alpha_1^*$ and $\alpha_m\alpha_{m-1}^*$ collide when $m\downarrow 3$, to form the $3$-bit
$\alpha_m\alpha_1^*$, where we think of $\alpha_2$ and $\alpha_{m-1}^*$ as annihilating one another in the process.
This annihilation, or cancelation, process will play a central role in this paper.

\section{Definition of the Map M }\label{Mdefinition}

The main point of this section is to properly formulate the map $M$ in (\ref{vdiagram}).

Given a point $z\in\Delta$, there is an isometry
$$S^1 \stackrel{I_{z,\rho}}{\rightarrow}S(z,\rho)$$
where $S(z,\rho)$, the sphere centered at $z$ with radius $\rho$, is equipped with the induced non-Euclidean
metric normalized to have length $2\pi$ (the non-normalized length
is $\pi sinh(2\rho)$) (see (\ref{nonEmetric})).

Let $Comp(Prob(S^1))$ denote the set of all compact subsets
of $Prob(S^1)$; equipped with the Vietoris topology, $Comp(Prob(S^1))$ is a metrizable compact Hausdorff space.

Define $\widetilde M:H^0(\Delta)/\mathbb C \times \Delta \to Comp(Prob(S^1))$ by
$$\widetilde M(f_+,z)=\{\text{limit points of } \frac{1}{\mathfrak z(\rho)}|e^{f_+\circ I_{z,\rho}}|^2\frac{d\theta}{2\pi} \text{ as } \rho\uparrow\infty\}$$
and let
$$(H^0(\Delta)/\mathbb C)':=\{f_+:\widetilde M(f_+,z) \text{ is a point } \forall z\in \Delta\}$$

\begin{lemma}\label{workinglemma}Suppose $g\in PSU(1,1)$. Then

(a) $I_{g\cdot z,\rho}=g\circ I_{z,\rho} $

(b) $\widetilde M(g_*f_+,g\cdot z)=\widetilde M(f_+,z) $

(c) $\widetilde M(\cdot,z)\circ g=\widetilde M(g_*(\cdot),z)=\widetilde M(\cdot,g^{-1}\cdot z)$

(d) $(H^0(\Delta)/\mathbb C)'$ is stable with respect the action of $PSU(1,1)$.

(e) If $\mu\in Prob(H^0(\Delta)/\mathbb C)^{PSU(1,1)}$, then $\widetilde M(f_+,0)$ is $\mu$-almost surely a point iff for each $z\in\Delta$,
$\widetilde M(f_+,z)$ is $\mu$-almost surely a point.

(f) If $\mu$ as in (e) is ergodic, then the $\mu$ measure of $(H^0(\Delta)/\mathbb C)'$ is zero or one.

\end{lemma}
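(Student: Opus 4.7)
The plan is to observe that (b)--(f) are formal consequences of the equivariance asserted in (a), which is the only statement requiring a concrete construction of $I_{z,\rho}$. To establish (a), one must pin down the rotation ambiguity in the isometry $S^1\to S(z,\rho)$ by a choice that respects the $PSU(1,1)$-action. A natural convention: fix $I_{0,\rho}$ as the standard (Euclidean) parameterization of $S(0,\rho)$ with $I_{0,\rho}(1)=\tanh(\rho)$; for general $z$, pick any $g\in PSU(1,1)$ with $g\cdot 0=z$ and set $I_{z,\rho}:=g\circ I_{0,\rho}\circ R$, where the residual rotation $R\in U(1)$ is fixed by requiring $I_{z,\rho}(1)$ to lie on the geodesic ray from $0$ through $z$ at non-Euclidean distance $\rho$ from $z$. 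With this normalization, both $I_{g\cdot z,\rho}$ and $g\circ I_{z,\rho}$ are isometries $S^1\to S(g\cdot z,\rho)$ sending $1$ to the same marked point, hence agree. The main technical obstacle is really this book-keeping; once the convention is fixed the remainder is essentially formal.

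For (b), recall that the action (\ref{0.5}) with $m=0$ reduces to pullback, $g_*f_+=f_+\circ g^{-1}$ modulo constants (and an added constant contributes only a global scalar, absorbed in $\mathfrak z(\rho)$). Combined with (a),
$$
(g_*f_+)\circ I_{g\cdot z,\rho}=f_+\circ g^{-1}\circ g\circ I_{z,\rho}=f_+\circ I_{z,\rho},
$$
so the $\rho$-families of measures defining $\widetilde M(g_*f_+,g\cdot z)$ and $\widetilde M(f_+,z)$ coincide term by term, hence have the same set of weak$^*$ limit points; this is (b). Part (c) is a repackaging of (b): the first equality is simply the definition of composition, and replacing $z$ by $g^{-1}\cdot z$ in (b) gives $\widetilde M(g_*f_+,z)=\widetilde M(f_+,g^{-1}\cdot z)$. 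Part (d) follows immediately, because if $\widetilde M(f_+,w)$ is a singleton for every $w\in\Delta$, then $\widetilde M(g_*f_+,z)=\widetilde M(f_+,g^{-1}\cdot z)$ is a singleton for every $z\in\Delta$.

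For (e), the direction ($\Leftarrow$) is trivial (specialize to $z=0$). For ($\Rightarrow$), given $z\in\Delta$, use transitivity of $PSU(1,1)$ on $\Delta$ to pick $g\in PSU(1,1)$ with $g\cdot z=0$; then (c) yields the identity of sets
$$
\{f_+:\widetilde M(f_+,z)\text{ is a point}\}=(g_*)^{-1}\bigl(\{h_+:\widetilde M(h_+,0)\text{ is a point}\}\bigr).
$$
Since the right-hand set has $\mu$-measure one by hypothesis and $g_*\mu=\mu$ by invariance, so does the left-hand set. Finally (f) is immediate from (d): $(H^0(\Delta)/\mathbb C)'$ is a $PSU(1,1)$-invariant Borel set, so an ergodic invariant probability assigns it measure $0$ or $1$.
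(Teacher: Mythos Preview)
Your argument is correct and matches the paper's approach: the paper's proof consists of a single sentence observing that (b) expresses $PSU(1,1)$-invariance of $\widetilde M$ and that everything else follows, which is precisely the skeleton you flesh out. One small caveat on (a): the marking convention you propose (placing $I_{z,\rho}(1)$ on the geodesic ray from $0$ through $z$) references the basepoint $0$ and therefore is not itself $PSU(1,1)$-equivariant, so it does not yield $I_{g\cdot z,\rho}=g\circ I_{z,\rho}$ on the nose; however, any two choices of $I_{z,\rho}$ differ by a rotation independent of $\rho$, so the families of measures entering $\widetilde M$ differ by a fixed rotation and your deductions of (b)--(f) go through unchanged (the paper does not address this point either).
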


\begin{proof} (b) asserts that $\widetilde M$, as a function of $f_+$ and a basepoint, is $PSU(1,1)$ invariant.
The other parts are consequences of this.

\end{proof}

From now on we will fix the basepoint $z=0$, which will break $PSU(1,1)$-invariance.

\begin{definition}\label{Mdefinition} $M:H^0(\Delta)/\mathbb C \to Comp(Prob(S^1))$ is defined by $M(\cdot)=\widetilde M(\cdot,0)$, i.e.
$$M(f_+)=\{\text{limit points of } \frac{1}{\mathfrak z(r)}|e^{2Re(f_+(re^{i\theta}))}\frac{d\theta}{2\pi} \text{ as } r\uparrow 1\}$$
\end{definition}

\begin{remark} This is a nonlocal regularization. In physics it is important that the regularization of $e^f$ is local. One can fairly ask why we are deviating from the local multiplicative chaos regularization; the answer is that we have stumbled on this point of view in other contexts, and this leads to interesting questions.
\end{remark}

Since it is expressed as a limit, $M$ is a Borel map. However, $M$ is not continuous, and it is not $PSU(1,1)$-equivariant in any sense that we can identify (see Theorem \ref{noninvariance} below). Despite all these shortcomings, it is a very interesting map (we will consider some of its deterministic properties elsewhere).

It follows from (f) of Lemma \ref{workinglemma} that if $\nu$ is an ergodic $PSU(1,1)$-invariant distribution
on $H^0(\Delta)/\mathbb C$, then $M_*(\nu)(Prob(S^1))$ is zero or one. There is an enormous variety of such measures,
and for some $M_*(\nu)(Prob(S^1))$ is zero and for others it is one.

\begin{proposition}\label{techprop} For $\beta^2<2$, $M_*\nu_{\beta}(Prob(S^1))=1$.
\end{proposition}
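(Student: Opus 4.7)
Under $\nu_\beta$, the real Gaussian field $X(z) := 2\Re f_+(z)$ has covariance
$$\mathrm{Cov}(X(z),X(w)) \;=\; -\tfrac{2}{\beta}\log|1-z\bar w|,$$
so its restriction to the circle $|z|=r$ is a log-correlated Gaussian field on $S^1$ with GMC ``coupling'' $\gamma^2 = 2/\beta$. Writing $\mathfrak z(r) := \int_{S^1} e^{X(re^{i\theta})} \frac{d\theta}{2\pi}$, the probability measures $\mu_r := \mathfrak z(r)^{-1} e^{X(re^{i\theta})} \frac{d\theta}{2\pi}$ constitute a radial regularization of the subcritical Gaussian multiplicative chaos on $S^1$, and Proposition \ref{techprop} amounts to showing weak-$*$ convergence of $\mu_r$, $\nu_\beta$-almost surely, as $r\uparrow 1$. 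My plan is to expose a clean martingale on the Fourier side, invoke Kahane's subcritical convergence theorem there, and then identify the radial limit via Gaussian covariance comparison.

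\textbf{Steps.} Direct computation using independence of the $f_n$ gives $\mathbb E[e^{X(re^{i\theta})}] = (1-r^2)^{-1/\beta}$ and $\mathbb E[e^{X(re^{i\theta})+X(re^{i\theta'})}] = (1-r^2)^{-2/\beta}|1-r^2 e^{i(\theta-\theta')}|^{-2/\beta}$. Thus the deterministically normalized random measure $\widetilde\mu_r := (1-r^2)^{1/\beta} e^{X(re^{i\theta})}\frac{d\theta}{2\pi}$ satisfies $\mathbb E\widetilde\mu_r(A) = |A|/(2\pi)$. On the truncation side, set $f_+^{(N)}(z) := \sum_{n=1}^N f_n z^n$, $H_N := \sum_{n=1}^N n^{-1}$, and
$$\widehat\mu_N \;:=\; e^{-H_N/\beta}\, e^{2\Re f_+^{(N)}(e^{i\theta})}\, \tfrac{d\theta}{2\pi}.$$
The identity $\mathbb E[\exp(2\Re(f_{N+1}e^{i(N+1)\theta}))] = e^{1/((N+1)\beta)}$ together with independence yields $\mathbb E[\widehat\mu_{N+1}(A)\mid \mathcal F_N] = \widehat\mu_N(A)$ for $\mathcal F_N := \sigma(f_1,\ldots,f_N)$ and any Borel $A \subset S^1$; so $(\widehat\mu_N(A))_N$ is a non-negative martingale. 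In the subcritical regime $\gamma^2 < 2$, equivalently $\beta > 1$, Kahane's theorem produces an almost sure weak-$*$ limit $\widehat\mu_N \to \widehat\mu_\infty$ which is almost surely a non-trivial finite Borel measure of full support with $\mathbb E\widehat\mu_\infty(A) = |A|/(2\pi)$. The covariances of $X(re^{i\theta})$ and of $2\Re f_+^{(N)}(e^{i\theta})$ both increase monotonically to the common kernel $-\tfrac{2}{\beta}\log|1-e^{i(\theta-\theta')}|$, so Kahane's convexity inequality identifies the limit of $\widetilde\mu_r$ with the same $\widehat\mu_\infty$. Since $\widehat\mu_\infty(S^1) > 0$ a.s., self-normalization gives
$$\mu_r \;=\; \widetilde\mu_r/\widetilde\mu_r(S^1) \;\xrightarrow{w^*}\; \widehat\mu_\infty/\widehat\mu_\infty(S^1) \;\in\; Prob(S^1), \qquad \nu_\beta\text{-a.s.},$$
so $M(f_+)$ is $\nu_\beta$-a.s.\ a single probability measure and $M_*\nu_\beta(Prob(S^1))=1$.

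\textbf{Main obstacle.} The delicate step is transferring from the coefficient-truncation regularization (with its clean martingale) to the non-local radial regularization in the definition of $M$. Kahane-style covariance comparison readily matches expectations and gives $L^1$ control, but upgrading to weak-$*$ convergence of probability measures requires uniform $L^p$-integrability of $\widetilde\mu_r(g)$ for $g$ ranging over a countable dense subset of $C(S^1)$ and some $p>1$. For a subcritical GMC of coupling $\gamma$ the admissible moments are $p < 2d/\gamma^2 = \beta$ (here $d = 1$), and the hypothesis $\beta^2 < 2$ (giving $1 < \beta < \sqrt 2$) is precisely what secures such $L^p$ bounds uniformly along both regularization families simultaneously. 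As the authors acknowledge, this restriction is almost certainly artificial; a more refined radial-decomposition analysis should extend the conclusion to the full subcritical range $\beta > 1$.
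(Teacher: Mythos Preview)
The paper does not give a self-contained proof of this proposition: it simply declares the result a corollary of standard Gaussian multiplicative chaos theory and cites section~3.3 of \cite{AJKS2} and Theorem~1.2 of \cite{Berestycki} (via \cite{CN}), remarking that the restriction on $\beta$ is probably an artifact of the nonlocal regularization. Your outline---compute the covariance of $X=2\Re f_+$, recognize a log-correlated field on $S^1$, build the Fourier-truncation martingale $\widehat\mu_N$, invoke Kahane's subcritical convergence, then transfer to the radial approximants by covariance comparison---is precisely the content of those references. So in spirit and in method you are doing exactly what the paper asks the reader to import.

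There is, however, a genuine slip in your parameter bookkeeping. You correctly compute the covariance $-\tfrac{2}{\beta}\log|1-z\bar w|$, hence a GMC coupling $\gamma^2=2/\beta$, and you correctly identify the subcritical regime as $\gamma^2<2$, i.e.\ $\beta>1$. You then assert that ``the hypothesis $\beta^2<2$ (giving $1<\beta<\sqrt 2$)'' secures this. But $\beta^2<2$ only gives $0<\beta<\sqrt 2$; it does \emph{not} force $\beta>1$. For $\beta\le 1$ (which is allowed by $\beta^2<2$) your argument yields $\gamma^2\ge 2$, and the Kahane martingale degenerates---your proof does not cover that range at all. Conversely, your argument would work for any $\beta>1$, not merely $\beta<\sqrt 2$.

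In short, the condition your argument actually uses ($\beta>1$) and the condition stated in the proposition ($\beta^2<2$) are different; they overlap on $(1,\sqrt 2)$ but neither contains the other. The paper itself flags that its notation is ``at odds'' with \cite{CN} and that the restriction is likely artificial, so this discrepancy almost certainly reflects a convention mismatch between the paper and its sources rather than a new phenomenon. But you should not resolve it by writing down a false implication; either reconcile the parameters explicitly or state honestly that your sketch establishes the conclusion for $\beta>1$ and note the apparent inconsistency with the stated hypothesis.
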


The proposition is a corollary of difficult but well-known facts from the theory of multiplicative chaos (For recent expositions, see section 3.3 of \cite{AJKS2} or Theorem 1.3 of \cite{CN} (i.e. Theorem 1.2 of \cite{Berestycki}) and the subsequent commentary (Our notation is consistent with the first reference and at odds with the second). As in the statement of Theorem \ref{maintheorem}, our nonlocal regularization probably makes the restriction on $\beta$ unnecessary, but we have failed to resolve this.

There exists a large (possibly intractable) family of $PSU(1,1)$ invariant distributions on $H^0(\Delta,\mathbb C)/\mathbb C$. This should be contrasted with the following

\begin{theorem}\label{noninvariance} There does not exist a $PSU(1,1)$ invariant probability measure on $Prob(S^1)$.\end{theorem}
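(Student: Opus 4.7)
The plan is to reduce the claim on $Prob(S^1)$ to the easier non-existence of a $PSU(1,1)$-invariant probability measure on $S^1$ itself, by an averaging argument. Suppose for contradiction that $\nu$ is a $PSU(1,1)$-invariant probability measure on $Prob(S^1)$ (for the natural pushforward action induced by the boundary action on $S^1$). Define the \emph{mean} $\bar\nu\in Prob(S^1)$ by
$$\int_{S^1} h\,d\bar\nu \;:=\; \int_{Prob(S^1)}\left(\int_{S^1} h\,d\mu\right)d\nu(\mu), \qquad h\in C(S^1).$$
This is well-defined since $\mu\mapsto\int h\,d\mu$ is continuous for the weak-$*$ topology, and the functional is positive with norm $\le\|h\|_\infty$. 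A direct unwinding of the pushforward definition, using the invariance of $\nu$, shows that $g_*\bar\nu=\bar\nu$ for every $g\in PSU(1,1)$. Thus it suffices to prove that no element of $Prob(S^1)$ is $PSU(1,1)$-invariant.

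Suppose $\bar\nu\in Prob(S^1)$ were such an invariant probability. The rotation subgroup $U(1)\subset PSU(1,1)$ acts transitively on $S^1$, so by invariance any atom of $\bar\nu$ would produce an entire uncountable orbit of atoms of equal mass, which is impossible. Hence $\bar\nu$ is atomless. Now consider the hyperbolic one-parameter family
$$g_r(z) \;=\; \frac{z+r}{1+rz} \;\in\; PSU(1,1), \qquad r\in[0,1).$$
Its boundary action displays ``north-south'' dynamics: the fixed points on $S^1$ are $\pm 1$, and for every $\theta\neq\pi$ one computes $g_r(e^{i\theta})\to 1$ as $r\uparrow 1$. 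Since $\bar\nu(\{-1\})=0$, dominated convergence yields $(g_r)_*\bar\nu\to\delta_1$ in the weak-$*$ topology. But $(g_r)_*\bar\nu=\bar\nu$ by invariance, so $\bar\nu=\delta_1$, contradicting atomlessness.

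The proof is entirely soft; the only inputs are the Borel structure on $Prob(S^1)$ inherited from weak-$*$ convergence (so that $\bar\nu$ is well-defined) and the north-south dynamics of hyperbolic elements of $PSU(1,1)$ on $\partial\Delta$. I expect the conceptual step to be the averaging reduction $\nu\mapsto\bar\nu$; once that is set up, the contradiction on $Prob(S^1)$ is just the standard fact that $PSU(1,1)$ admits no invariant probability on its visual boundary. Compared with the proof in the excerpt's references, the present route avoids any appeal to multiplicative chaos, conformal barycenters, or the map $M$.
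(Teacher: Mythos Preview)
Your proof is correct and takes a genuinely different route from the paper. The paper argues by contradiction via ergodic decomposition: an invariant probability on $Prob(S^1)$ would yield an ergodic one, hence a $\mu$ whose $PSU(1,1)$-orbit is weak$^*$ dense; it then shows by a case analysis of limits of sequences $(g_n)_*\mu$ (parabolic, hyperbolic, compact) that the orbit closure of \emph{any} $\mu$ consists only of the orbit together with the circle of delta measures, which is never all of $Prob(S^1)$. Your argument instead pushes the hypothetical invariant measure $\nu$ on $Prob(S^1)$ forward through the barycenter map $\nu\mapsto\bar\nu$ to obtain a $PSU(1,1)$-invariant probability on $S^1$, and then rules that out by the standard north--south dynamics of a hyperbolic one-parameter subgroup. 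This is shorter and more conceptual: it isolates the real obstruction (no invariant probability on the boundary of a non-amenable group) and uses only the equivariance of the barycenter. The paper's approach, while longer, has the side benefit of describing the orbit closures in $Prob(S^1)$ explicitly, which is of some independent interest; your approach trades that extra information for brevity. One small remark: your closing sentence about avoiding ``multiplicative chaos, conformal barycenters, or the map $M$'' is a red herring, since the paper's own proof of this theorem uses none of those either.
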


\begin{proof} Suppose otherwise. Then there exists an ergodic invariant probability measure. Relative to this ergodic measure, for a.e. $\mu\in Prob(S^1)$, $PSU(1,1)_{*}\mu$ is dense in $Prob(S^1)$. We claim that the weak$^*$ closure of $PSU(1,1)_{*}\mu$ is the union of $PSU(1,1)_{*}\mu$ and the set of $\delta$ measures around the circle. This will imply that the $PSU(1,1)$ orbit is not weak$^*$ dense, a contradiction.

There exists a point $z_0\in S^1$ with $\mu(\{z_0\})=0$. Given $0<r<1$,
$$\phi_1(z)=\frac{z+re^{-iq_1}}{1+re^{iq_1}z} $$
is a hyperbolic element of $PSU(1,1)$ with a repelling fixed point at $z_0$ and an attracting fixed point at $z_0^*$.
The limit of the measures $(\phi_1)_*\mu$ tends to $\delta_{z_0^*}$ as $\r\uparrow 1$. By applying a rotation if necessary, we can arrange for the limit to be any delta measure around the circle. Thus the delta measures are in the closure of the $PSU(1,1)$ orbit through $\mu$.

Conversely suppose that $\nu$ is in the closure of $PSU(1,1)_*\mu$. There exists a sequence $g_n \in PSU(1,1)$ such that $(g_n)_*\mu$ converges weakly to $\nu$. For each $n$ let $z_n\in \overline{\Delta}$
denote a fixed point for $g_n$. By passing to a subsequence if necessary, we can suppose that $z_n \to z_0 \in \overline{\Delta}$.

If $z_0\in \Delta$, then we can suppose $z_0=0$. A subsequence of the $g_n$ will then converge to a rotation. In this case $\nu$ is just a rotation of $\mu$ and we are done.

So suppose that $z_0\in S^1$. In this event it is convenient to switch to the upper half space with $z_0=\infty$.

By passing to a subsequence if necessary, we can suppose all of the $g_n$ are parabolic, or all of the $g_n$ are hyperbolic. In the parabolic case the $g_n$ are essentially horizontal translations. If there is a finite limit, then $g_n$ converges to a parabolic element. Otherwise $\nu$ is the delta measure at $\infty$.

Suppose the $g_n$ are hyperbolic. Each $g_n$ has a second fixed point $z_n'$. By passing to a subsequence we can suppose this second sequence converges. Suppose these second fixed points converge to $z_0$. Then we are essentially back in the parabolic case.

Suppose that the $z_n'$ converge to a second point $z_1$ on the circle. We can suppose this second point is $z_1=0$ (in the upper half plane model). In this event the $g_n$ are dilations. If there is a finite limit for the magnitude of dilations, then the $g_n$ converge
to a hyperbolic element and $\nu$ is in the $PSU(1,1)$ orbit of $\nu$. If for some subsequence, the magnitude goes to zero or one, then $\nu$
is a delta measure at either $0$ or $\infty$.

This completes the proof.

\end{proof}

\begin{corollary} There does not exist a $PSU(1,1)$ invariant probability measure on $S^1\backslash Homeo(S^1)$, where $PSU(1,1)$ acts by composition on the right (where $S^1$ is identified with rotations and homeomorphisms are assumed to be orientation preserving).

\end{corollary}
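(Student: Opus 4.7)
The plan is to reduce the Corollary to Theorem \ref{noninvariance} by exhibiting a $PSU(1,1)$-equivariant map from $S^1 \backslash Homeo(S^1)$ to $Prob(S^1)$ and pushing forward. Write $\omega := \frac{d\theta}{2\pi}$ for normalized Lebesgue measure on $S^1$, and define
$$\Phi: Homeo(S^1) \to Prob(S^1), \qquad \Phi(\phi) := (\phi^{-1})_*\omega.$$
Since $\omega$ is rotation invariant, $\Phi(r_\theta \circ \phi) = (\phi^{-1} \circ r_{-\theta})_*\omega = (\phi^{-1})_*(r_{-\theta})_*\omega = (\phi^{-1})_*\omega = \Phi(\phi)$ for every rotation $r_\theta$, so $\Phi$ descends to a well-defined map $\overline{\Phi}: S^1 \backslash Homeo(S^1) \to Prob(S^1)$ on the left quotient.

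The next step is to check that $\overline{\Phi}$ intertwines the right $PSU(1,1)$-action on the domain with the natural action on the codomain. A direct computation gives
$$\Phi(\phi \circ g) = ((\phi \circ g)^{-1})_*\omega = (g^{-1} \circ \phi^{-1})_*\omega = (g^{-1})_*\Phi(\phi).$$
Suppose, for contradiction, that $\mu$ is a $PSU(1,1)$-invariant probability measure on $S^1 \backslash Homeo(S^1)$ (with respect to the right action by composition). The identity above shows that $\overline{\Phi}_*\mu \in Prob(Prob(S^1))$ is invariant under $\nu \mapsto (g^{-1})_*\nu$ for every $g \in PSU(1,1)$. Since $g^{-1}$ ranges over all of $PSU(1,1)$ as $g$ does, $\overline{\Phi}_*\mu$ is $PSU(1,1)$-invariant in the standard sense on $Prob(S^1)$, contradicting Theorem \ref{noninvariance}.

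The main, and essentially only, technical concern is to ensure that $\Phi$ is Borel so that $\overline{\Phi}_*\mu$ is genuinely a Borel probability measure on $Prob(S^1)$. Equipping $Homeo(S^1)$ with the uniform topology and $Prob(S^1)$ with the weak-$*$ topology, for any continuous $h: S^1 \to \mathbb{R}$ one has $\int h\, d\Phi(\phi) = \int_0^{2\pi} h(\phi^{-1}(e^{i\theta}))\frac{d\theta}{2\pi}$, which depends continuously on $\phi$ because inversion is continuous on $Homeo(S^1)$ in the uniform topology. Hence $\Phi$ is continuous and in particular Borel. Beyond this continuity check and the routine algebraic verifications above, no substantive obstacle is anticipated; the Corollary follows by the short reduction outlined.
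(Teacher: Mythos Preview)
Your argument is correct and coincides with the paper's proof: the map $\Phi(\phi)=(\phi^{-1})_*\omega$ is precisely the ``generalized derivative divided by $2\pi$'' that the paper uses to set up the $PSU(1,1)$-equivariant identification with atomless full-support measures, and your pushforward step is exactly the paper's ``thus the corollary follows from the theorem.'' The only cosmetic difference is that the paper records the map as a bijection onto $\{\mu:\mu_d=0,\ \mathrm{supp}(\mu)=S^1\}$, whereas you (sufficiently for the corollary) use only the forward equivariant map and check its continuity.
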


\begin{proof} There is a $PSU(1,1)$ equivariant isomorphism
$$\{\mu\in Prob(S^1):\mu_d=0,support(\mu)=S^1\}\leftrightarrow S^1\backslash Homeo(S^1)$$
(given a homeomorphism, the generalized derivative divided by $2\pi$ is a probability measure,
and given an atomless probability measure with full support, the corresponding cumulative distribution function (unique up to a shift)
is a homeomorphism).
Thus the corollary follows from the theorem.
\end{proof}

\section{Volume Using Root Subgroup Factorization}\label{RSFvolume}

In this section we use the notation established in Section \ref{RSF}. Throughout we fix $N<\infty$,
and by convention, $x_0=1$.

\begin{lemma} The map
$$\prod_{n=1}^N\Delta \to \prod_{n=1}^N\mathbb C:(\alpha_1,...\alpha_N) \to (x_1,...,x_N) $$
where $1+\sum_{n=1}^Nx_nz^n=\gamma_2+\delta_2$, is an injective map onto an open domain properly
contained in
$$\{(x_1,...,x_N):x=1+\sum_{n=1}^N x_nz^n \text{ is nonvanishing in }closure(\Delta) \}$$

\end{lemma}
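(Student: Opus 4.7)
The plan is to establish four claims—the image lies in the nonvanishing set, the map is injective, the image is open, and the containment is strict—with the last expected to be the main difficulty.

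Containment in the nonvanishing set follows from the classical Schur--Verblunsky fact that, when $|\alpha_k|<1$ for every $k\leq N$, all zeros of the monic orthogonal polynomial $p_N$ lie in the open disk $\Delta$. Since $r_N(z)=z^N\overline{p_N(1/\bar z)}$, the zeros of $r_N$ are the images of the zeros of $p_N$ under the involution $z\mapsto 1/\bar z$, so they all lie in $\{|z|>1\}$, and $r_N$ is nonvanishing on $\overline{\Delta}$.

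Injectivity is obtained by running the Szego recursion in reverse. Given $(x_1,\ldots,x_N)$ in the image, form $r_N=1+\sum x_n z^n$ and $p_N(z)=z^N\overline{r_N(1/\bar z)}$, and read off $\alpha_N=\overline{p_N(0)}$; Lemma~\ref{volumelemma1} applied to the top index $n=N$ shows the only contributing term is $\alpha_N\alpha_0^*=\alpha_N$, so $\alpha_N=x_N$. The Szego matrix step
\[\begin{pmatrix}p_n\\r_n\end{pmatrix}=\begin{pmatrix}z&\alpha_n^*\\ \alpha_n z&1\end{pmatrix}\begin{pmatrix}p_{n-1}\\r_{n-1}\end{pmatrix}\]
has determinant $z(1-|\alpha_n|^2)$; inverting yields $p_{N-1}=(p_N-\alpha_N^* r_N)/(z(1-|\alpha_N|^2))$ and $r_{N-1}=(r_N-\alpha_N p_N)/(1-|\alpha_N|^2)$. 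The identity $p_N(0)=\alpha_N^*$ ensures divisibility by $z$, and the cancelation of leading coefficients in $r_N-\alpha_N p_N$ (both equal to $\alpha_N z^N$) ensures $r_{N-1}$ has degree at most $N-1$, with $r_{N-1}(0)=1$. Iterating downward uniquely determines $\alpha_{N-1},\ldots,\alpha_1$, proving injectivity.

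Openness then follows from Brouwer's invariance of domain: both source and target have real dimension $2N$, the map is continuous (polynomial in $\alpha_k,\alpha_k^*$), and injectivity has been established, so the image is an open subset of $\mathbb{C}^N$. The remaining and most delicate point is strict containment: the image is an open subset of the nonvanishing set of the same real dimension $2N$. One concrete strategy is to exhibit a polynomial $1+\sum x_n z^n$ nonvanishing on $\overline{\Delta}$ whose formal reverse Szego recursion produces some $|\alpha_k|\geq 1$. An alternative is to compare boundary strata: points in the closure of the image but not in the image correspond to Verblunsky sequences with some $|\alpha_k|=1$, whereas points on the boundary of the nonvanishing set correspond to polynomials acquiring a zero on $S^1$. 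I would first attempt the explicit-example approach in a low degree where the multilinear structure of $x_n$ in $\alpha_j,\alpha_j^*$ becomes rich enough to expect a divergence between the two open sets.
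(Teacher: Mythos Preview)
Your treatments of nonvanishing, injectivity, and openness are all correct, and they differ in detail from the paper's: for injectivity the paper invokes the composite correspondence $x \mapsto f_+=-\log x \mapsto \mu=M(f_+) \mapsto \alpha$ rather than your explicit reverse recursion, and for openness the paper asserts that an injective polynomial map has injective derivative (a step that is not valid for real polynomial maps in general---witness $t\mapsto t^3$ on $\mathbb R$---so your appeal to invariance of domain is actually the cleaner route).

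For the strict containment, the paper's argument is a single line you did not anticipate: since each $|\alpha_j|<1$, the image is bounded. But this argument is incomplete, because the nonvanishing set is \emph{also} bounded: if $1+\sum_{n=1}^N x_n z^n$ is nonvanishing on $\overline\Delta$ then the reversed monic polynomial $z^N+\bar x_1 z^{N-1}+\cdots+\bar x_N$ has all its zeros in $\Delta$, whence $|x_n|\le\binom{N}{n}$. In fact the ``properly contained'' assertion itself appears to be in error. The classical Schur--Cohn/Verblunsky correspondence (see e.g.\ Simon, \emph{OPUC} Part~I, Theorem~1.7.5) says that the Szego recursion gives a bijection between $\Delta^N$ and the set of monic degree-$N$ polynomials with all zeros in $\Delta$; passing to reversed polynomials, this is exactly the statement that $(\alpha_1,\dots,\alpha_N)\mapsto(x_1,\dots,x_N)$ is \emph{onto} the nonvanishing set. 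So your proposed strategies for producing a point outside the image cannot succeed, and the reverse recursion you wrote down will in fact always return parameters in $\Delta$ when started from a polynomial nonvanishing on $\overline\Delta$---that is precisely the content of the Schur--Cohn test.
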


\begin{proof}Abbreviate $\alpha=(\alpha_1,..,\alpha_N,0,...)$.  First note that $x$ is nonvanishing. This is because $x=\gamma_2+\delta_2$ is the $N$th reversed polynomial for the measure corresponding to $\alpha$.

We now show that the map is injective. Given $\alpha$ mapping to $x_1,..,x_N$, we obtain a corresponding $f_+=-log(x)$ and a measure
$\mu=M(f_+)$ corresponding to $\alpha$. So the map is injective. The fact that the map  $\alpha \to (x_1,...,x_N)$ is an injective polynomial map implies that the derivative is injective. This implies that the image is open.
Since the $\alpha_j$ are bounded by one, the image is bounded, hence it is properly contained in the set of
$x=1+\sum_{n=1}^Nx_nz^n$ which are nonvanishing in the closure of $\Delta$ (Unfortunately it is not clear how to precisely describe the image).

\end{proof}

\begin{theorem}
Consider the composition of maps
$$(\alpha_1,..,\alpha_N) \to x=\gamma_2+\delta_2 \to f_+ \to (f_1,...,f_N)$$.

(a) For the bijective polynomial map $f_+=\sum_{n=1}^N f_nz^n \to (x_1,...,x_N)$, where $x=exp(-f_+)$,
$$\left(\prod_{n=1}^N d\lambda(f_n)\right)=\prod_{n=1}^N d\lambda(x_n) $$

(b)
$$\chi_{\tilde D_N}(x_1,...,x_N)\prod_{n=1}^N d\lambda(x_n)=\prod_{n=1}^N(1-|\alpha_n|^2)^{n-1}d\lambda(\alpha_n) $$
where $\tilde D_N$ is the image of the map $(\alpha_1,...,\alpha_N)\to (x_1,...,x_N)$.

(c) $$\chi_{D_N}(f_1,...,f_N)\prod_{n=1}^N d\lambda(f_n)= \prod_{n=1}^N(1-|\alpha_n|^2)^{n-1} d\lambda(\alpha_n) $$
where $D_N$ is the image of the map $(\alpha_1,...,\alpha_N)\to (f_1,...,f_N)$.

\end{theorem}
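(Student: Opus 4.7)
Parts (a) and (c) reduce to short Jacobian calculations. For (a), the polynomial map $f_+ \mapsto x = e^{-f_+}$ has $x_n = -f_n + Q_n(f_1,\ldots,f_{n-1})$ for some polynomials $Q_n$, so its complex Jacobian with respect to $(f_1,\ldots,f_N)$ is lower triangular with $-1$'s on the diagonal; since the map is holomorphic, the real Jacobian is $|\det|^2 = 1$. Part (c) follows immediately from (a) and (b) by composition.

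The main work is in (b), which I plan to prove by induction on $N$ via the Szego recursion of Lemma \ref{szegorecursion}. Write $r_{N-1} = 1 + \sum_{k=1}^{N-1} y_k z^k$, so each $y_k$ depends only on $\alpha_1,\ldots,\alpha_{N-1}$; the inductive hypothesis asserts that $(\alpha_1,\ldots,\alpha_{N-1}) \mapsto (y_1,\ldots,y_{N-1})$ has Jacobian $\prod_{k=1}^{N-1}(1-|\alpha_k|^2)^{k-1}$. Combining $r_N = r_{N-1} + \alpha_N z^N r_{N-1}^*$ with the Laurent expansion $r_{N-1}^*(z) = 1 + \sum_{k=1}^{N-1} \overline{y_k}\,z^{-k}$ yields the explicit coefficient relations
$$x_k = y_k + \alpha_N \overline{y_{N-k}} \qquad (1 \leq k \leq N),$$
with the conventions $y_0 := 1$ and $y_N := 0$; in particular $x_N = \alpha_N$. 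Thus the full map factors as $(\alpha_1,\ldots,\alpha_N) \to (y_1,\ldots,y_{N-1},\alpha_N) \to (x_1,\ldots,x_N)$, and by the chain rule it suffices to compute the Jacobian of the second arrow $\Phi$.

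Because $x_N = \alpha_N$, the real Jacobian of $\Phi$ is block-triangular and reduces to that of the fiber map $(y_1,\ldots,y_{N-1}) \mapsto (x_1,\ldots,x_{N-1})$ at fixed $\alpha_N$. Since $x_k$ depends only on $y_k$ and $\overline{y_{N-k}}$, I would pair the indices $\{k,N-k\}$: for $k < N/2$ the pair satisfies the $\mathbb{C}$-linear system
$$\begin{pmatrix} x_k \\ \overline{x_{N-k}} \end{pmatrix} = \begin{pmatrix} 1 & \alpha_N \\ \overline{\alpha_N} & 1 \end{pmatrix} \begin{pmatrix} y_k \\ \overline{y_{N-k}} \end{pmatrix},$$
with complex determinant $1 - |\alpha_N|^2$, contributing $(1-|\alpha_N|^2)^2$ to the real Jacobian per pair; for $N$ even, the self-paired coordinate $k = N/2$ contributes an additional factor $1 - |\alpha_N|^2$. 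Handling $N$ odd and $N$ even separately, the pair counts collapse to the uniform answer $(1 - |\alpha_N|^2)^{N-1}$ for the fiber Jacobian, closing the induction. The only real subtlety I anticipate is this parity bookkeeping, together with checking that the cross-couplings of $x_k$ with $\alpha_N$ do not spoil the block-triangular structure (which follows from $\partial x_N/\partial y_j = 0$)—individually routine, but requiring care to see that the exponent $N-1$ emerges uniformly in both parities.
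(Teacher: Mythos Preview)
Your proposal is correct and follows essentially the same strategy as the paper: both arguments treat (a) by the triangular form of $x=e^{-f_+}$, deduce (c) from (a) and (b), and prove (b) by induction on $N$ via the Szego recursion, arriving at the key relation $x_k=y_k+\alpha_N\overline{y_{N-k}}$ (with $x_N=\alpha_N$). The only difference is in how the fiber Jacobian is evaluated: the paper expands the full wedge product $\bigwedge_k(dy_k+\alpha_N\,d\overline{y_{N-k}})\wedge(d\overline{y_k}+\overline{\alpha_N}\,dy_{N-k})$ and collects the resulting terms via a binomial identity, whereas you observe that the map block-diagonalizes under the pairing $\{k,N-k\}$, each $2\times2$ block having determinant $1-|\alpha_N|^2$. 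Your route avoids the sign and combinatorial bookkeeping of the wedge expansion at the cost of the odd/even parity split; both compute the same linear-algebraic determinant and yield $(1-|\alpha_N|^2)^{N-1}$.
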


\begin{proof} (a)  There is a simple triangular relationship between the $x_n$ and the $f_n$,
namely $f_n=x_n+p(x_1,..,x_{n-1})$, where $p$ is a polynomial (e.g. $f_1=x_1$, $f_2=x_2-\frac12 x_1^2$, ...). Consequently (in terms of differential forms)
$$df_1\wedge ...\wedge df_N=dx_1\wedge ...\wedge dx_N$$
and similarly for volume.

(b) It is straightforward to check this for small $N$. For $N=1$, $x_1=\alpha_1$. For $N=2$,
$x_1=\alpha_1+\alpha_2\overline{\alpha_1}$, $x_2=\alpha_2$, and the result is obvious.

Suppose the result holds for $N-1$. Write $y_j$ for $x_j(\alpha_1,\overline{\alpha_1},..,\overline{\alpha_{N-1}},0...)$ and $x_j$ for $x_j(\alpha_1,\overline{\alpha_1},...,\overline{\alpha_N},0,...)$. Thus we are assuming (in terms of differential forms)
\begin{equation}\label{inductionstep}dy_1\wedge d\overline{y_1}\wedge ...\wedge dy_{N-1}\wedge d\overline{y_{N-1}}=\prod_{n<N}(1-|\alpha_n|^2)^{n-1}
d\alpha_1\wedge d\overline{\alpha_1}\wedge...\wedge d\alpha_{N-1}\wedge d\overline{\alpha_{N-1}} \end{equation}

Lemma \ref{volumelemma1} implies
$$x_j=y_j+\alpha_N\overline{y_{N-j}} $$
In particular $x_N=\alpha_N$. Therefore
$$dx_1\wedge d \overline{x_1}\wedge ...\wedge dx_N \wedge d \overline{x_N}=$$
$$(dy_1+\alpha_Nd\overline{y_{N-1}}) \wedge (d\overline{y_1}+\overline{\alpha_N}dy_{N-1})\wedge...\wedge (dy_{N-1}+\alpha_Nd\overline{y_{1}}) \wedge (d\overline{y_{N-1}}+\overline{\alpha_N}dy_{1}) \wedge d\alpha_N\wedge d\overline{\alpha_N}   $$
Now consider terms obtained by expanding this product of factors as a sum (similar to what one does in proving the binomial formula).
One term is obtained by choosing $dy_j$ (or its conjugate) from each of the factors; we can evaluate this using (\ref{inductionstep}) and wedging this with $d\alpha_N\wedge d\overline{\alpha_N}$. As an example of how we could obtain another nonzero term, we could
choose $dy_1$ from the first factor and $\alpha_{N} d\overline y_{1}$ from the penultimate factor; then we would be forced to choose
$\overline{\alpha_{N}} d y_{N-1}$ from the second factor, and $d \overline{y_{N-1}}$ from the last factor; and then we must make further choices involving other variables. The upshot is that each of the other nonzero terms is of the following form, up to a sign:
$$\wedge_{r=1}^{\stackrel{k}{\rightarrow}}\left(dy_{i_r}\wedge \alpha_{N} d\overline y_{i_r}\wedge dy_{N-i_r}\wedge \overline{\alpha_{N}} d\overline y_{N-i_r} \right)\wedge_{s=1}^{\stackrel{N-k}{\rightarrow}}\left(dy_{j_s}\wedge d\overline{y_{j_s}}\right)\wedge d\alpha_N\wedge d\overline{\alpha_N}$$
where $i_1<..<i_k<N$, $j_1<...<j_{N}<N$, and the $\{i_r\}$ and $\{j_s\}$ are disjoint. Using the induction step, up to sign, this equals
$|\alpha_N|^{2k} $ times the right hand side of (\ref{inductionstep}).
The number of ways in which we could obtain a term of this form is $\left(\begin{matrix} N\\k\end{matrix}\right)$. When we account for signs, we can use the binomial formula to add these terms over $k=0...N-1$ to obtain (b).

(c) follows from (a) and (b). This proves the theorem.

\end{proof}

\section{The Gaussian Distribution for $x$}\label{proof1}

The strategy of the proof of Theorem \ref{maintheorem} is to show that the two distributions in the statement of theorem induce the same distribution for the coefficients $x_1,x_2,...$, where if we start with the $\nu_{\beta}$ distribution for $f_+$, then $x=exp(-f_+)$, and if we start with the $\alpha$ distribution, then $x=\gamma_2+\delta_2$ (as in Section \ref{RSF}). In this section we do the easier Gaussian calculation, and we will see that
the answer has an interesting interpretation.
In the next section we will do the other (Fermionic) calculation.

Throughout this section we will use multi-index notation. For a multi-index $p=(p(1),p(2),...)$, length $L(p):=|\{j:p(j)>0\}|$, $p!:=\prod_j p(j)!$, $l^1$ norm $|p|:=\sum_j p(j)$, and $deg(p):=\sum_j jp(j)$.

\subsection{The Gaussian Case}

In this subsection we suppose that $f_1,...$ (the coefficients of $f_+$) are distributed according to
$$d\nu_{\beta}=\prod_{n=1}^{\infty}\frac{n\beta}{\pi}e^{-n\beta|f_n|^2}d\lambda(f_n) $$
Note that
$$E(|f_n|^{2k})= \frac{k!}{(n\beta)^k} \text{ and } E(\prod_{n\ge 1} f_n^{p(n)}(\prod_{m\ge 1} f_m^{q(m)})^*)= \prod_{n\ge 1} \frac{p(n)!}{(n\beta)^{p(n)}}$$
if $p=q$ and zero otherwise. In multi-index notation, with $f=(f_1,f_2,...)$,
\begin{equation}\label{gaussmoments}E(f^p(f^q)^*)= \frac{p!}{\prod_n n^{p(n)}}\beta^{-|p|} \end{equation}
if $p=q$ and zero otherwise.

\begin{theorem} \label{gausscase1} Suppose that $x=exp(-f_+)$. If $deg(p)=deg(q)$, then
$$E(x^p(x^q)^*)=\sum \frac{1}{\prod_{n,r} (J_{n,r}!)\prod_{m,s}(K_{m,s}!)}
\frac{(\sum_{n,r} J_{n,r})!}{\prod_u u^{(\sum_{n,r} J_{n,r})(u)}}\beta^{-|\sum_{n,r} J_{n,r}|} $$
where the sum is over all multi-indices $J_{n,r},K_{m,s}$ which satisfy $deg(J_{n,r})=n$,
$deg(K_{m,s})=m$, $\sum_{n,r} J_{n,r}=\sum_{m,s} K_{m,s}$ (equality of two multi-indices),
and for given indices $n,m>0$, $1\le r\le p(n)$, $1\le s\le q(m)$.

If $deg(p)\ne deg(q)$, then the expectation vanishes.

\end{theorem}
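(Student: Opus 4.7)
The plan is a direct computation: expand $x = e^{-f_+}$ as a formal series in the $f_j$'s, expand $x^p(x^q)^*$ into monomials, and apply (\ref{gaussmoments}) term by term.

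First I would combine the exponential series with the multinomial expansion of $f_+^k = (\sum_{j\ge 1} f_j z^j)^k$ to obtain
$$x_n \;=\; [z^n]\, e^{-f_+} \;=\; \sum_{J:\ \deg(J)=n} \frac{(-1)^{|J|}}{J!}\, f^J,$$
where $f^J := \prod_j f_j^{J(j)}$. Since $f_+$ has no constant term, $[z^n]f_+^k=0$ for $k>n$, so the sum is finite and $x_n$ is a genuine polynomial in $f_1,\dots,f_n$; all subsequent rearrangements are then legitimate. Note the key cancellation $\frac{1}{k!}\cdot\frac{k!}{J!} = \frac{1}{J!}$ between the exponential's denominator and the multinomial coefficient.

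Next I would raise to the power $p$, writing the $p(n)$-fold product of $x_n$ as an unsymmetrized product indexed by $r = 1,\dots,p(n)$:
$$x^p \;=\; \prod_n \prod_{r=1}^{p(n)} x_n \;=\; \sum_{\{J_{n,r}\}} \left(\prod_{n,r} \frac{(-1)^{|J_{n,r}|}}{J_{n,r}!}\right) f^{P},\qquad P := \sum_{n,r} J_{n,r},$$
where $J_{n,r}$ ranges over multi-indices with $\deg(J_{n,r})=n$. Analogously,
$$(x^q)^* \;=\; \sum_{\{K_{m,s}\}} \left(\prod_{m,s} \frac{(-1)^{|K_{m,s}|}}{K_{m,s}!}\right) (f^*)^{Q},\qquad Q := \sum_{m,s} K_{m,s},$$
with $\deg(K_{m,s})=m$, $1 \le s \le q(m)$. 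Taking the expectation term by term and invoking (\ref{gaussmoments}) forces $P=Q$ on the support and evaluates $E(f^P(f^*)^P) = \frac{P!}{\prod_u u^{P(u)}}\beta^{-|P|}$. The sign prefactors multiply to $(-1)^{|P|+|Q|} = (-1)^{2|P|}=1$ on that support, so they disappear, leaving exactly the sum stated in the theorem. The vanishing claim when $\deg(p)\ne\deg(q)$ is immediate: any surviving term satisfies
$$\deg(P) = \sum_{n,r}\deg(J_{n,r}) = \sum_n n\, p(n)= \deg(p),$$
and likewise $\deg(Q)=\deg(q)$, so $P=Q$ is impossible.

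The only real hazard, and the step most prone to bookkeeping error, is tracking the combinatorial factors. One must resist symmetrizing over the indices $r$ within a fixed $n$, which would introduce spurious factors of $\frac{1}{p(n)!}$. Keeping the $r$-index visible (so that the multi-indices $J_{n,r}$ are treated as distinguishable even if they happen to coincide) yields the coefficient in the clean form $\prod_{n,r}\frac{1}{J_{n,r}!}\prod_{m,s}\frac{1}{K_{m,s}!}$ displayed in the theorem.
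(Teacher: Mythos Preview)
Your argument is correct and follows essentially the same route as the paper: expand $x_n$ via the exponential/multinomial formula, take the unsymmetrized product indexed by $(n,r)$ and $(m,s)$, and apply (\ref{gaussmoments}). The only cosmetic differences are that the paper replaces $x=e^{-f_+}$ by $y=e^{f_+}$ (using that $f_+$ and $-f_+$ are equidistributed) to avoid carrying the signs you later cancel via $(-1)^{|P|+|Q|}=1$, and that the paper phrases the $\deg(p)\ne\deg(q)$ vanishing in terms of rotational invariance rather than your equivalent $\deg(P)=\deg(p)$ observation.
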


\begin{examples}\label{gaussexamples} Here are two extreme examples:

(a) Because $x_1=-f_1$ is Gaussian, $E(x_1^p(x_1^p)^*)=p!\beta^{-p}$, $p=0,1,...$.

(b) In the next subsection we will see that for a general multi-index $p$ with $d=deg(p)$, $E(x^px_d^*)$ is the probability generating function for a sum of
independent Bernoulli random variables.

\end{examples}

\begin{proof} The group of rotations of the circle acts naturally on the various objects in the statement of the theorem:
rotation by $\theta$ acts on $f_+$ ($x$) by $(f_n)\to (e^{-in\theta}f_n)$ ($(x_n) \to (e^{-in\theta}x_n)$, respectively),
the map $f_+ \to x=exp(-f_+)$ is equivariant, and the measure $\nu_{\beta}$, hence its expectation, is invariant.
Consequently if $deg(p)\ne deg(q)$, then rotations act on $x^p(x^q)^*$ by a nontrivial character, and the expectation in the theorem must vanish. We henceforth assume $deg(p)=deg(q)$.

$f_+$ and $-f_+$ have the same distribution, so in what follows we will calculate the distribution for
the coefficients of $y=exp(f_+)$; these distributions will be the same as for the coefficients of $x=exp(-f_+)$.
This notational change will eliminate signs.

Recall
$$exp(\sum_{n>0}f_nz^n)=\sum_{n\ge 0}y_nz^n \text{  where  }
y_n=\sum_{\{J:deg(J)=n\}}\frac{1}{J!}f^J $$
where $J=(J(1),J(2),...)$ is a multi-index. Hence
\begin{equation}\label{sum1}E(y^p(y^q)^*)
=E(\prod_{n\ge 0} (\sum_{J_n}\frac{1}{J_n!}f^{J_n})^{p(n)}(\prod_{m\ge 0} (\sum_{K_m}\frac{1}{K_m!}f^{K_m})^{q(m)})^*) \end{equation}
where $J_1,J_2,...,K_1,...$ are multi-indices satisfying the constraints $deg(J_n)=n$, $deg(K_m)=m$. In order to take the $p(n)$ power, it is convenient to introduce independent copies of the multi-index $J_n$, which we denote by $J_{n,1},..,J_{n,p(n)}$ (each has degree $n$), as in the statement of the theorem. Then (\ref{sum1})
\begin{equation}\label{sum2}=\sum_{J_{n,r},K_{m,s}}\frac{1}{(\prod_{n,r} J_{n,r}!)(\prod_{m,s} K_{m,s}!)}E\left(f^{\sum_{n,r} J_{n,r}}(f^{\sum_{m,s} K_{m,s}})^*\right) \end{equation}
where the multi-indices satisfy the constraints $deg(J_{n,r})=n$, $deg(K_{m,s})=m$. (\ref{gaussmoments}) implies this \begin{equation}\label{sum3}=\sum \frac{1}{\prod_{n,r} (J_{n,r}!)\prod_{m,s}(K_{m,s}!)}
\frac{(\sum_{n,r} J_{n,r})!}{\prod_u u^{(\sum_{n,r} J_{n,r})(u)}}\beta^{-|\sum_{n,r} J_{n,r}|} \end{equation}
where the sum is over all multi-indices $J_{n,r},K_{m,s}$ which satisfy the constraints $deg(J_{n,r})=n$, $deg(K_{m,s})=m$, $\sum_{n,r} J_{n,r}=\sum_{m,s} K_{m,s}$ (equality of two multi-indices), and for given indices $n,m>0$, $1\le r\le p(n)$, $1\le s\le q(m)$. This
completes the proof.

\end{proof}

\subsection{Combinatorial Interpretations}

Note that
$$\sum_{n,r,u}J_{n,r}(u)\le \sum_{n,r,u}uJ_{n,r}(u)=\sum_n np(n)=deg(p)$$
Therefore the sum in Theorem \ref{gausscase1} is a polynomial in $\beta^{-1}$ of the form
\begin{equation}\label{laurentpoly}\sum_{k=1}^{deg(p)} a(p,q,k)\beta^{-k} \end{equation}
where
\begin{equation}\label{coeffdfn}a(p,q,k)=\sum \frac{1}{\prod_{n,r} (J_{n,r}!)\prod_{m,s}(K_{m,s}!)}
\frac{(\sum_{n,r} J_{n,r})!}{\prod_u u^{(\sum_{n,r} J_{n,r})(u)}}\end{equation}
and the sum is over all multi-indices $J_{n,r},K_{m,s}$ which satisfy $deg(J_{n,r})=n$,
$deg(K_{m,s})=m$, $\sum_{n,r} J_{n,r}=\sum_{m,s} K_{m,s}$ (equality of two multi-indices),
for given indices $n,m>0$, $1\le r\le p(n)$, $1\le s\le q(m)$, and $\sum_{n,r,u}J_{n,r}(u)=k$.
We will often suppress $p,q$ and write $a_k=a(p,q,k)$.

A trivial observation is that the $a(p,q,k)$ are nonnegative. Consequently (\ref{laurentpoly}) is the mass generating function for a finite positive measure, and in some cases it is a probability mass generating function (in the variable $\beta^{-1}$).

\subsubsection{A Special Case: Variance}

We first consider a pivotal special case, the variance $E(x_nx_n^*)$. In this case
Theorem \ref{gausscase1} specializes to
\begin{equation}\label{laurentpoly2}E(x_nx_n^*)=\sum_{k=1}^{n} a_k^{(n)}\beta^{-k} \end{equation}
where
\begin{equation}\label{coeffdfn2}a_k^{(n)}=\sum \frac{1}{J!\prod_u u^{J(u)}}\end{equation}
and the sum is over multi-indices $J$ which satisfy $deg(J)=n$ and $|J|=k$ (In the notation of (\ref{coeffdfn}),
$p(n)=1=q(n)$, $p$ and $q$ vanish otherwise, and $J:=J_{n,1}=K_{n,1}$).
We will see that in terms of $\beta^{-1}$, this is a probability mass function, and it has many different combinatorial interpretations.

Let $S_n$ denote the symmetric group on $n$ letters $\{1,...,n\}$. There are natural inclusions $S_n \subset S_{n+1}$, where $S_n$ is identified with the permutations which fix $n+1$. Let $\mathcal P_n$ denote the set of partitions of $n$. On the one hand
$\mathcal P_n$ can be identified with the conjugacy classes of $S_n$, because the conjugacy class of an element is determined by the cardinalities of its orbits. On the other hand a partition of $n$ can be identified with its density, which is a multi-index of degree $n$. Consider the composition of maps
\begin{equation}\label{randomvariable2}S_n \to S_n/conj\leftrightarrow \mathcal P_n \leftrightarrow \{J:deg(J)=n\} \stackrel{|\cdot|}{\rightarrow} \{1,2,...,n\}\end{equation}
where the first map sends a group element to its conjugacy class, the second and third maps are equivalences (as described above), and the fourth map sends a density $J$ to $|J|=\sum_u J(u)$; the composition sends a permutation to the number of its orbits.
The normalized Haar measure for $S_n$ pushes forward to the probability measure which attaches weight $|C|/n!$ to a conjugacy class $C$.
This is most easily computed by realizing that it is the same as the reciprocal of the size of the stability subgroup of a representative for the conjugacy class; the cardinality of the stabilizer is $\prod_u J(u)!u^{J(u)}$, where $J$ is the corresponding density (The factorial corresponds to permuting the $J(u)$ orbits of size $u$, and the other factor corresponds to the fact that a permutation stabilizing an orbit can map a given element to any of the $u$ elements in the orbit).

\begin{lemma} (\ref{laurentpoly2}) is the probability mass function (in the variable $\beta^{-1}$) for the random variable
(\ref{randomvariable2}), i.e.
$$\sum \prod_{u=1}^n\frac{1}{J(u)!u^{J(u)}}=1$$
where the sum is over all multi-indices $J$ such that $deg(J)=n$. More precisely
$$\sum \prod_{u=1}^n\frac{1}{J(u)!u^{J(u)}}=\frac 12$$
where the sum is over all multi-indices $J$ such that $deg(J)=n$ and $|J|=\sum J(u)$ is odd.

\end{lemma}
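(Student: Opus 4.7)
The plan is to interpret the sum as a normalized count on the symmetric group $S_n$. As spelled out in the paragraph preceding the lemma, each multi-index $J$ with $\deg(J)=n$ corresponds to a conjugacy class of $S_n$ via cycle type, and the number of permutations in that class is exactly $n!/\prod_u J(u)!\,u^{J(u)}$. Dividing by $n!$ and summing over conjugacy classes then expresses $\sum_J \prod_u \frac{1}{J(u)!\,u^{J(u)}}$ as the total mass of the pushforward of normalized Haar measure on $S_n$ under the cycle-type map, which gives the first identity.

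For the second identity, my plan is to exploit the sign character $\mathrm{sign}\colon S_n \to \{\pm 1\}$. Since a $u$-cycle has sign $(-1)^{u-1}$, a permutation $\sigma$ with cycle-type density $J$ satisfies
$$\mathrm{sign}(\sigma) = \prod_u \bigl((-1)^{u-1}\bigr)^{J(u)} = (-1)^{\sum_u (u-1)J(u)} = (-1)^{n-|J|}.$$
Hence the parity of $|J|$ is determined by $\mathrm{sign}(\sigma)$, up to the global factor $(-1)^n$. For $n \geq 2$ the sign is a nontrivial group character, so its expectation with respect to normalized Haar measure vanishes, which translates to
$$\sum_J (-1)^{n-|J|} \prod_u \frac{1}{J(u)!\,u^{J(u)}} = 0.$$
Combined with the first identity, this forces the odd-$|J|$ and even-$|J|$ partial sums each to equal $\frac{1}{2}$.

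An equivalent, perhaps slicker, proof of both identities at once uses the exponential generating function
$$\sum_{n \geq 0} z^n \sum_{\deg J = n} t^{|J|} \prod_u \frac{1}{J(u)!\,u^{J(u)}} = \prod_u \exp\!\left(\frac{t z^u}{u}\right) = (1-z)^{-t};$$
evaluating at $t=1$ gives the geometric series $\sum_n z^n$ (yielding the first identity), while at $t=-1$ it gives $1-z$ (whose coefficient of $z^n$ vanishes for $n \geq 2$), and taking the sum and difference with the $t=1$ evaluation separates the odd and even parts. There is no substantive technical obstacle, since both parts reduce to classical facts about $S_n$. The only mild subtlety is the edge case $n=1$, where the odd-$|J|$ sum equals $1$ rather than $\frac{1}{2}$; this is harmless in context, since the $n=1$ case of (\ref{laurentpoly2}) amounts to the already-verified statement from Examples \ref{gaussexamples}(a) that $x_1 = -f_1$ is Gaussian.
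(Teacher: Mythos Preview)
Your proof is correct and follows essentially the same route as the paper's own proof. The first identity is obtained in both cases from the preceding discussion identifying $\prod_u \frac{1}{J(u)!\,u^{J(u)}}$ with the Haar mass of the corresponding conjugacy class; for the second, your use of the sign character is exactly the character-theoretic reformulation of the paper's remark that ``the second statement uses the existence of a normal degree two subgroup $A_n\subset S_n$,'' since $A_n=\ker(\mathrm{sign})$ and nontriviality of $\mathrm{sign}$ for $n\ge 2$ is the same as $[S_n:A_n]=2$. Your generating-function alternative and the explicit flagging of the $n=1$ edge case are welcome additions that the paper leaves implicit.
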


\begin{proof} The first statement follows from the preceding discussion. The second statement uses
the existence of a normal degree two subgroup $A_n\subset S_n$.
\end{proof}

\begin{theorem}\label{variancepmf}
$$E(x_nx_n^*)=\prod_{k=1}^n (\frac{1}{k}\beta^{-1}+\frac{k-1}{k})=\left(\begin{matrix}\beta^{-1}+n-1\\n\end{matrix}\right)$$
i.e. this is the pmf for a sum of $n$ independent Bernoulli random variables with harmonic parameters $p_k=\frac{1}{k}$, $k=1,...,n$.
\end{theorem}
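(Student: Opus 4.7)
My plan is to compute the full bivariate generating function of the coefficients $a_k^{(n)}$ in closed form, recognize it as a Newton binomial series, and then extract the coefficient of $z^n$. Because $J! = \prod_u J(u)!$ and $\prod_u u^{J(u)}$ are both multiplicative in the parts of $J$, the sum in the definition of $a_k^{(n)}$ factors over the index $u$. Concretely, define
$$G(z,t) := \sum_{n\ge 0}\Big(\sum_{k} a_k^{(n)}\, t^k\Big) z^n = \sum_{J} \frac{t^{|J|}\, z^{\deg J}}{J!\,\prod_u u^{J(u)}}$$
where the inner sum runs over all multi-indices $J$. Factoring over $u$ and summing the exponential series in each factor gives
$$G(z,t) = \prod_{u=1}^{\infty}\sum_{j=0}^{\infty}\frac{1}{j!}\Big(\frac{t z^u}{u}\Big)^{\!j} = \exp\!\Big(t\sum_{u\ge 1}\frac{z^u}{u}\Big) = \exp\!\big(-t\log(1-z)\big) = (1-z)^{-t}.$$

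Next, I would extract the coefficient of $z^n$ from the Newton series $(1-z)^{-t}=\sum_{n\ge 0}\binom{t+n-1}{n}z^n$ to obtain
$$\sum_{k=1}^{n} a_k^{(n)}\, t^k = \binom{t+n-1}{n} = \prod_{k=1}^{n}\frac{t+k-1}{k}.$$
Setting $t=\beta^{-1}$ recovers exactly the product $\prod_{k=1}^n\big(\tfrac{1}{k}\beta^{-1}+\tfrac{k-1}{k}\big)$, as required. The final claim that this is the pmf of a sum of independent Bernoullis with harmonic parameters is then just the observation that $\prod_{k=1}^n\big(\tfrac{1}{k}t+\tfrac{k-1}{k}\big)$ is the probability generating function of $\sum_{k=1}^n X_k$ with $X_k\sim\mathrm{Bernoulli}(1/k)$ independent.

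Equivalently (and this provides a useful sanity check linking back to the preceding lemma), one can argue combinatorially: the quantity $n!\,a_k^{(n)}$ counts permutations of $\{1,\dots,n\}$ with exactly $k$ cycles, since for each cycle type (density) $J$ with $\deg J=n$ and $|J|=k$, the conjugacy class size is $n!/(J!\prod_u u^{J(u)})$. Thus $n!\,a_k^{(n)}$ is the unsigned Stirling number of the first kind $c(n,k)$, and the classical identity $\sum_k c(n,k)x^k = x(x+1)\cdots(x+n-1)$ gives the same conclusion; this is the content of the Feller coupling for random permutations.

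There is no real obstacle: the only point that requires a line of justification is the rearrangement of the sum over multi-indices $J$ as the infinite product over $u$, but this is valid as an identity of formal power series in $z$ and $t$ (and in fact converges absolutely for $|z|$ small), so extracting coefficients is legitimate.
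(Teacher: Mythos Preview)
Your proof is correct and takes a genuinely different route from the paper's. You compute the bivariate generating function $\sum_{n,k} a_k^{(n)} t^k z^n$ directly, factor it over $u$ into exponentials, and recognize the result as $(1-z)^{-t}$; extracting the $z^n$ coefficient via Newton's binomial series then gives the product formula. The paper instead works probabilistically: it identifies $\sum_k a_k^{(n)} t^k$ with the probability generating function of the number of cycles of a uniformly random element of $S_n$, then constructs explicit Bernoulli variables $X_k(\sigma)=\mathbf{1}_{\{\sigma(k)=k\}}$ on the space of virtual permutations (using the coherent projections $S_n\to S_{n-1}$ that delete $n$ from its cycle) and proves by induction that (i) $X_1+\cdots+X_n$ equals the cycle count and (ii) the $X_k$ are independent with parameters $1/k$. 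Your generating-function argument is shorter and entirely self-contained; the paper's argument is longer but supplies an explicit probabilistic realization of the factorization---it actually exhibits the independent Bernoullis rather than deducing their existence from the factored pgf. Your ``sanity check'' via unsigned Stirling numbers and the identity $\sum_k c(n,k)x^k = x(x+1)\cdots(x+n-1)$ is essentially a compressed version of the paper's argument, with the Feller-type coupling hidden inside the classical proof of that identity.
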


The following is elementary (and probably well-known).

\begin{proposition}\label{recursionchar}
If $X_1,...$ is a sequence of integer valued random variables, and $G_n(s)$ denotes the probability mass function for the partial sum $\sum_{k=1}^nX_k$, then the following two statements are equivalent: (1)
$$G_n(s)=\prod_{k=1}^n(\frac 1k s+\frac{k-1}{k})$$
i.e. the $X_k$ are independent Bernoulli random variables with harmonic parameters
$p_k=\frac 1k$, and (2) the $G_n$ satisfy the recursion
$$G_n(s)=\frac{1}{n}\left(\sum_{k=0}^{n-1}G_{k}(s)\right) s$$
(where $G_0=1$).
\end{proposition}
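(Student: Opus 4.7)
The proof is a short induction, organized around the observation that the recursion in $(2)$, together with the initial value $G_0=1$, determines the entire sequence $(G_n)$ uniquely. The plan is therefore to verify that the product formula in $(1)$ satisfies the recursion in $(2)$; the reverse implication is then immediate from this uniqueness.

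For the implication $(1)\Rightarrow(2)$, I would write $H_k(s):=\tfrac{1}{k}s+\tfrac{k-1}{k}=\tfrac{s+(k-1)}{k}$, so that $G_n=\prod_{k=1}^n H_k$. The key algebraic identity is
$$nG_n(s) \;=\; (s+n-1)\,G_{n-1}(s) \;=\; sG_{n-1}(s)+(n-1)G_{n-1}(s).$$
I would then proceed by induction on $n$. The base case $n=1$ reads $G_1(s)=s=sG_0(s)$, which is immediate. For the inductive step, substitute the hypothesis $(n-1)G_{n-1}(s)=s\sum_{k=0}^{n-2}G_k(s)$ into the identity above:
$$nG_n(s) \;=\; sG_{n-1}(s) + s\sum_{k=0}^{n-2}G_k(s) \;=\; s\sum_{k=0}^{n-1}G_k(s),$$
which is exactly the recursion in $(2)$.

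For $(2)\Rightarrow(1)$, define $\tilde G_n(s):=\prod_{k=1}^n H_k(s)$. The previous step shows that $\tilde G_0=1$ and that $\tilde G_n(s)=\tfrac{s}{n}\sum_{k=0}^{n-1}\tilde G_k(s)$. Since the recursion in $(2)$ expresses $G_n$ uniquely in terms of $G_0,\dots,G_{n-1}$, any sequence satisfying the same recursion with $G_0=1$ must coincide with $(\tilde G_n)$ term by term, giving the desired product formula.

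There is no genuine obstacle here; the only mild subtlety worth flagging is that the recursion together with $G_0=1$ already forces $G_1(s)=s$, which corresponds to the degenerate Bernoulli with $P(X_1=1)=1$ (parameter $p_1=1$). This is consistent with the product formula but means that the first ``random'' variable in the sequence is in fact deterministic, which is essential for matching the $n=1$ base case on both sides.
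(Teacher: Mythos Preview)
Your argument is correct. The paper itself does not supply a proof of this proposition---it only remarks that the statement ``is elementary (and probably well-known)''---so there is nothing to compare against; your induction via the identity $nG_n(s)=(s+n-1)G_{n-1}(s)$, together with the uniqueness observation for the recursion, is exactly the kind of direct verification the authors presumably had in mind.
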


Thus the theorem can also be stated in the following equivalent form.

\begin{theorem}\label{variancepmf2}
$$E(x_nx_n^*)=\frac{1}{n}\sum_{k=0}^{n-1}E(x_kx_k^*)\beta^{-1}$$
(recall that $x_0=1$)\end{theorem}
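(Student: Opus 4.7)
The plan is to package the sequence $\{E(x_n x_n^*)\}_{n\geq 0}$ into a single generating function, recognize it as $(1-z)^{-1/\beta}$, and read off the recursion from a single-line differentiation.

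First I would exploit the cycle-type interpretation of the formula in (\ref{laurentpoly2})--(\ref{coeffdfn2}). Since $J!\prod_u u^{J(u)} = \prod_u J(u)!\, u^{J(u)}$ is the size of the centralizer in $S_n$ of any permutation of cycle type $J$,
$$E(x_n x_n^*) \;=\; \sum_{J:\deg J = n} \frac{\beta^{-|J|}}{J!\prod_u u^{J(u)}} \;=\; \frac{1}{n!}\sum_{\sigma \in S_n} \beta^{-c(\sigma)},$$
where $c(\sigma)$ denotes the number of cycles of $\sigma$.

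Second, I would invoke the classical exponential formula for the cycle index of the symmetric groups,
$$\sum_{n\geq 0}\left(\sum_{J:\deg J=n}\prod_u \frac{p_u^{J(u)}}{J(u)!\, u^{J(u)}}\right)z^n \;=\; \exp\!\left(\sum_{u\geq 1}\frac{p_u z^u}{u}\right),$$
and specialize $p_u=\beta^{-1}$ for every $u\geq 1$. Since $\prod_u \beta^{-J(u)}=\beta^{-|J|}$, this gives
$$F(z) \;:=\; \sum_{n\geq 0} E(x_n x_n^*)\, z^n \;=\; \exp\!\left(-\beta^{-1}\log(1-z)\right) \;=\; (1-z)^{-1/\beta}.$$

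Third, I would differentiate: $F'(z) = \beta^{-1}(1-z)^{-1}F(z)$. Using the Cauchy product
$$(1-z)^{-1}F(z) \;=\; \sum_{n\geq 0}\Bigl(\sum_{k=0}^{n} E(x_k x_k^*)\Bigr)z^n$$
and comparing with $F'(z)=\sum_{n\geq 0}(n+1)E(x_{n+1}x_{n+1}^*)z^n$, the coefficient of $z^{n-1}$ on both sides yields
$$n\, E(x_n x_n^*) \;=\; \beta^{-1}\sum_{k=0}^{n-1} E(x_k x_k^*),$$
which is exactly the stated recursion. There is no real obstacle here: the substance is in the first two steps, both of which are classical. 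As a sanity check, the alternative form $(1-z)F'(z)=\beta^{-1}F(z)$ of the same ODE gives $n E(x_n x_n^*)=(n-1+\beta^{-1})E(x_{n-1}x_{n-1}^*)$, recovering the product form of Theorem \ref{variancepmf}; this is consistent with Proposition \ref{recursionchar}'s assertion that the two formulations are equivalent.
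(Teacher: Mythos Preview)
Your argument is correct, and it takes a genuinely different route from the paper's. The paper establishes Theorem~\ref{variancepmf2} by first proving Theorem~\ref{variancepmf}: it realizes the number-of-cycles random variable $|J|$ on the space of virtual permutations as a sum $X_1+\cdots+X_n$ of explicit Bernoulli variables (where $X_k(\sigma)=1$ iff $\sigma(k)=k$), proves their independence by a direct inductive counting argument, and only then invokes Proposition~\ref{recursionchar} to convert the product form into the recursion. Your approach instead bypasses the probabilistic construction entirely: you assemble the moments into the generating series $F(z)=\sum_n E(x_nx_n^*)z^n$, identify it via the exponential formula as $(1-z)^{-1/\beta}$, and read off the recursion from the ODE $F'=\beta^{-1}(1-z)^{-1}F$. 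This is shorter and more elementary for the purpose of the stated identity; the paper's proof, on the other hand, yields the extra structural information that the Bernoulli variables can be realized concretely and coherently on the tower of symmetric groups, which is the ``main point'' the authors emphasize. Your sanity-check observation that $(1-z)F'=\beta^{-1}F$ gives the one-step recurrence $nE(x_nx_n^*)=(n-1+\beta^{-1})E(x_{n-1}x_{n-1}^*)$ nicely closes the loop with Theorem~\ref{variancepmf} and Proposition~\ref{recursionchar}.
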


The main point, spelled out in the proof, is that we can realize the Bernoulli variables in an interesting way.

\begin{proof} (of Theorem \ref{variancepmf}) Recall that the space of virtual permutations is
the inverse limit of the system of projections
$$...\to S_{n+1} \stackrel{p_{n+1}}{\rightarrow} S_n \stackrel{p_n}{\rightarrow} ...\to S_1$$
where $p_n(\sigma\in S_n)=\sigma$ if $\sigma\in S_{n-1}\subset S_n$ and $p_n(\sigma)(k)=\sigma(k)$ if $\sigma(k)<n$
and $p_n(\sigma)(k)=\sigma(n)$ if $\sigma(k)=n$. In terms of the cycle structure of $\sigma$, $p_n$ simply has the effect of deleting $n$ from the relevant cycle. The important point is that with respect to these projections, the Haar measures are coherent; see section 1 of \cite{KOV}.

Let $X_k$ denote the Bernoulli random variable with parameter $p=\frac 1k$, originally defined on $S_k$ but extended to the space of
virtual permutations using the above projections, given
by $X_k(\sigma)=1$ if $\sigma(k)=k$ and $X_k(\sigma)=0$ otherwise. Note
$$G_{X_k}(\beta^{-1})=\frac 1k \beta^{-1}+\frac{k-1}{k}$$

We claim that on $S_{n}$, (1) $|J|=\sum{k=1}^n X_k$ and (2) $X_1,...,X_n$ are independent. We prove the first claim by induction on $n$. So we suppose that $X_1+...+X_{n-1}=|J|$ on $S_{n-1}$ and we have to show that $(X_1+...+X_{n-1})\circ p_{n}+X_{n}=|J|$ on $S_{n}$. If $\sigma(n)= n$,
then the number of cycles for $\sigma$ as an element of $S_{n}$ is one greater than viewed as an element of $S_{n-1}$. Since $X_{n}(\sigma)=1$ in this case, we have equality. If $\sigma(n)\ne n$, then
$X_{n}(\sigma)=0$ and the number of cycles stays the same. Thus we again have equality. This prove (1).

Now consider (2). We assume (2) holds for $n-1$. Because we have established (1), $G_{|J|}$ factors as a product of $G_{X_k}$, $k=1,...,n-1$, on $S_{n-1}$. It therefore suffices to show that $|J|\circ p_n$ and $X_n$ are independent, i.e.
\begin{equation}\label{claim2}P(\{|J|\circ p_n=r\}\cap \{X_{n}=\delta\})=P(\{|J|\circ p_n=r\})P(\{X_{n}=\delta\})\end{equation}
where $\delta=1$ or $=0$.
(for this will imply that $G_{|J|}$ factors on $S_n$). This is a counting problem, because the probabilities are computed using the counting probability measure on $S_n$. Suppose that $\delta=1$. In this case the right hand side
 equals $=\frac 1n P(\{|J|\circ p_n=r\})$. On the left hand side we are counting the number of $\sigma\in S_{n-1}\subset S_n$ having $r$ cycles. For the probability on the right we have to count the number
of $\eta\in S_n$ such that $p_n(\eta)$ has $r$ cycles. If $\eta(n)\ne n$, then $\eta$ is obtained from an $\eta'\in S_{n-1}$ having $r$ cycles by inserting $n$ into one of the cycles. This can be done in $n-1$ ways. This shows
that
$$n|\{|J|\circ p_n=r\}\cap \{X_{n}=\delta\}|=|\{|J|\circ p_n=r\}|$$
This proves (2) for $\delta=1$. The case $\delta=0$ follows automatically.

Given (1) and (2), it follows that on $S_n$
$$G_{|J|}(\beta^{-1})=\prod_{k=1}^n(\frac 1k \beta^{-1}+\frac{k-1}{k})=\frac {1}{n!}\prod_{k=1}^n(\beta^{-1}+k-1)
=\left(\begin{matrix}\beta^{-1}+n-1\\n\end{matrix}\right)$$
This proves Theorem \ref{variancepmf}.
\end{proof}

Yet another way to state the theorem is the following:

\begin{corollary}
$$a_k^{(n)}=\frac{1}{n!}\sum l_1...l_{n-k}$$
where the sum is over all choices of $n-k$ numbers $0\le l_1<l_2<...<l_{n-k}< n$,
and the vacuous sum is $=1$.
\end{corollary}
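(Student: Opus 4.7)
The plan is to identify the coefficient $a_k^{(n)}$ directly by expanding the closed-form expression for $E(x_nx_n^*)$ established in Theorem \ref{variancepmf}. By that theorem,
$$E(x_nx_n^*)=\prod_{k=1}^n\left(\frac{1}{k}\beta^{-1}+\frac{k-1}{k}\right)=\frac{1}{n!}\prod_{k=1}^n\bigl(\beta^{-1}+(k-1)\bigr),$$
so the whole question reduces to expanding the rising factorial $\prod_{k=1}^n(\beta^{-1}+(k-1))$ as a polynomial in $\beta^{-1}$ and comparing with (\ref{laurentpoly2}).

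Next, I would observe that for any indeterminates $t_0,\dots,t_{n-1}$, the standard identity
$$\prod_{j=0}^{n-1}(s+t_j)=\sum_{k=0}^{n} e_{n-k}(t_0,\dots,t_{n-1})\,s^{k}$$
holds, where $e_{j}$ denotes the $j$th elementary symmetric polynomial (with $e_0=1$). Specializing $s=\beta^{-1}$ and $t_j=j$ for $0\le j\le n-1$ yields
$$\prod_{k=1}^n\bigl(\beta^{-1}+(k-1)\bigr)=\sum_{k=0}^{n} e_{n-k}(0,1,\dots,n-1)\,\beta^{-k}.$$
Dividing by $n!$ and matching coefficients of $\beta^{-k}$ in (\ref{laurentpoly2}) gives
$$a_k^{(n)}=\frac{1}{n!}\,e_{n-k}(0,1,\dots,n-1)=\frac{1}{n!}\sum_{0\le l_1<l_2<\cdots<l_{n-k}<n}l_1 l_2\cdots l_{n-k},$$
which is exactly the claimed formula. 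The convention that the empty product equals $1$ (i.e.\ $e_0=1$) covers the case $k=n$, matching the stated convention that the vacuous sum equals $1$.

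There is no real obstacle here: once Theorem \ref{variancepmf} is in hand, the corollary is a one-line expansion of a rising factorial in terms of elementary symmetric polynomials. The only thing worth checking carefully is the degree bookkeeping — that the coefficient of $\beta^{-k}$ in a degree-$n$ polynomial corresponds to picking the $n-k$ ``constant'' terms $t_j=j$ from the product, which is consistent with summing over $(n-k)$-subsets of $\{0,1,\dots,n-1\}$.
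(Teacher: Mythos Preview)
Your proof is correct and is exactly the argument the paper has in mind: the corollary is introduced with ``Yet another way to state the theorem is the following,'' and no separate proof is given, because it follows immediately from expanding the product $\frac{1}{n!}\prod_{j=0}^{n-1}(\beta^{-1}+j)$ in Theorem~\ref{variancepmf} and reading off the coefficient of $\beta^{-k}$ as an elementary symmetric polynomial in $0,1,\dots,n-1$.
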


\subsubsection{The General Case I}

In this subsection we consider the constraints on the terms in the the formula for $E(x^p(x^q)^*)$ in Theorem \ref{gausscase1}, when $d=deg(p)=deg(q)$. These can be visualized in terms of the following diagram:

$$\begin{matrix}\prod_{\{p(n)>0\}} \left(\mathcal P_n \times ...\times \mathcal P_n\right)&\times &\prod_{\{q(m)> 0\}}\left(\mathcal P_m\times...\times \mathcal
P_m\right)\\
\downarrow & & \downarrow\\
\mathcal P_{d} & \times & \mathcal P_{d}\end{matrix}
\qquad
\begin{matrix} ((J_{n,r}), (K_{m,s}))\\
\downarrow \\
(\sum_{n,r}J_{n,r},\sum_{m,s}K_{m,s}) \end{matrix}  $$
where we are identifying partitions with their densities (which can be added), and in the diagram for each $n$ ($m$)
there are $p(n)$ copies of $\mathcal P_n$ ($q(m)$ copies of $\mathcal P_m$, respectively).

At the base we must impose the constraint $\sum_{n,r}J_{n,r}=\sum_{m,s}K_{m,s} $.
This means that we are really interested in the diagonal
$$\Delta(\mathcal P_{d})\subset \mathcal P_{d} \times \mathcal P_{d}$$
and its inverse image. Note that the map $(K_{m,s})\to K=\sum_{m,s}K_{m,s}$ is generally far from surjective,
because $K_{m,s}(v)=0$ for $v>m$ (since $deg(K_{m,s})=m$). This implies that $K(v)=0$ for $v>\max supp(q)$.
This implies the following

\begin{lemma}\label{Lconstraint}The image of the projection intersected with the diagonal consists of pairs $(L,L)=(J,K)\in \Delta(\mathcal P_{d})$ such that $L(u)=0$ for $u>u_0$ where $u_0=\min\{\max supp(p),\max supp(q)\}$. \end{lemma}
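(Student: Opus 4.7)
The plan is that this is essentially a bookkeeping observation once one uses the degree constraint on each summand. The key elementary fact is that for any multi-index $M$ with $\deg(M)=\sum_v v M(v) = n$, one must have $M(u)=0$ for every $u>n$: otherwise the contribution $uM(u)\ge u>n$ already exceeds the total degree. I would state this as a one-line preliminary observation.

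Next I would apply this separately on each side of the diagram. Each $J_{n,r}$ satisfies $\deg(J_{n,r})=n$, so $J_{n,r}(u)=0$ for $u>n$. Summing over $(n,r)$ with $p(n)>0$ and $1\le r\le p(n)$ gives
\[
J(u)=\sum_{n,r} J_{n,r}(u)=0 \quad\text{whenever } u>\max\mathrm{supp}(p),
\]
since for such $u$ every summand vanishes. The same argument applied to $K_{m,s}$ shows $K(u)=0$ whenever $u>\max\mathrm{supp}(q)$.

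Now I would impose the diagonal condition $J=K=:L$. Combining the two vanishing ranges, $L(u)$ must vanish as soon as $u$ exceeds \emph{either} $\max\mathrm{supp}(p)$ or $\max\mathrm{supp}(q)$, i.e.\ whenever $u>u_0=\min\{\max\mathrm{supp}(p),\max\mathrm{supp}(q)\}$. This proves the stated necessary condition on pairs in the image of the projection restricted to the diagonal. Conversely, any $L\in\mathcal P_d$ supported in $\{1,\dots,u_0\}$ actually arises this way (one can, for example, distribute the mass of $L$ among the $J_{n,r}$'s and $K_{m,s}$'s respecting the degree constraints), so the description is sharp; I would note this briefly but not belabor it since the lemma only claims the containment needed for the subsequent moment identity.

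There is no real obstacle here: the whole content is the implication $\deg(M)=n \Rightarrow \mathrm{supp}(M)\subseteq\{1,\dots,n\}$, together with additivity of support under the sum of multi-indices. The only thing to be careful about is that one is allowed to pass from ``support of each summand'' to ``support of the sum'' (which is immediate because the entries are nonnegative, so no cancellation occurs). Hence the argument is complete in a few lines.
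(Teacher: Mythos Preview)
Your proof of the containment direction is correct and is exactly the argument the paper gives (in the sentences immediately preceding the lemma): $\deg(K_{m,s})=m$ forces $K_{m,s}(v)=0$ for $v>m$, hence $K(v)=0$ for $v>\max\mathrm{supp}(q)$, and symmetrically for $J$; intersecting on the diagonal gives the bound by $u_0$.

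One caution: your parenthetical claim that the converse holds---that every $L\in\mathcal P_d$ supported in $\{1,\dots,u_0\}$ actually arises---is not true in general, and you should drop it. For example take $p$ with $p(1)=2$, $p(4)=1$ (so $d=6$, $\max\mathrm{supp}(p)=4$) and any $q$ with $\max\mathrm{supp}(q)\ge 3$. Then $u_0\ge 3$, and $L$ with $L(3)=2$ (all other entries zero) is supported in $\{1,\dots,u_0\}$ with $\deg(L)=6$. But $L$ cannot be written as $J_{1,1}+J_{1,2}+J_{4,1}$: each $J_{1,r}$ has degree $1$, hence must equal $\delta_1$, forcing $L(1)\ge 2$, contradiction. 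The lemma (and the paper) is really only asserting the containment, which is all that is used downstream; your instinct that only this direction matters is right, but the sharper statement you sketched fails.
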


\subsubsection{Multiplicity Free Case}

There is a simple probabilistic interpretation in the following generalization of the variance case (In the next section we will see that this corresponds to a multiplicity free condition from the $\alpha$ distribution point of view).

\begin{theorem}\label{generalizationformula} If $d=deg(p)$, then the expected value $E(x^p (x_d)^*)$ is the generating function for a sum of independent Bernoulli random variables. More precisely
$$E(x^p (x_d)^*)=\prod_{n\ge 1} \left(\prod_{k=1}^{n}(\frac{1}{k}\beta^{-1}+\frac{k-1}{k})\right)^{p(n)} $$

\end{theorem}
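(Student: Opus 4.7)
The plan is to specialize Theorem \ref{gausscase1} to the case $q = (0,\ldots,0,1,0,\ldots)$ with the single $1$ in slot $d$, so that $q(d)=1$ and $q(m)=0$ otherwise. Under this choice there is exactly one multi-index of type $K$ in play, namely $K_{d,1}$, and the constraint $\deg(K_{d,1})=d$ combined with $\sum_{n,r}J_{n,r} = K_{d,1}$ becomes the sole link between the $J$'s and $K$. The key observation is that $K_{d,1}$ is completely determined by the $J_{n,r}$'s: given any choice of multi-indices $J_{n,r}$ with $\deg(J_{n,r})=n$ (for $n\ge 1$, $1\le r\le p(n)$), the degree of $L:=\sum_{n,r} J_{n,r}$ is automatically $\sum_n n\, p(n) = \deg(p) = d$, so one may simply set $K_{d,1}=L$ and the compatibility condition is satisfied. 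Thus the sum in Theorem \ref{gausscase1} is indexed only by the collection $\{J_{n,r}\}$.

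Next I would exploit a cancellation. Writing $L = K_{d,1} = \sum_{n,r} J_{n,r}$, the typical summand has numerator $L!$ (from the Gaussian moment) and denominator containing $K_{d,1}! = L!$, so these factorials drop out, leaving
\[
E(x^p\, x_d^*) \;=\; \sum_{\{J_{n,r}\}} \prod_{n,r}\frac{1}{J_{n,r}!}\cdot\frac{1}{\prod_u u^{L(u)}}\;\beta^{-|L|}.
\]
Now I would use the two multiplicativity identities $\prod_u u^{L(u)} = \prod_{n,r}\prod_u u^{J_{n,r}(u)}$ and $|L| = \sum_{n,r}|J_{n,r}|$, both of which follow instantly from $L=\sum_{n,r}J_{n,r}$. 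These identities cause the entire summand to factor across the pairs $(n,r)$, and the sum over $\{J_{n,r}\}$ factors correspondingly into a product of independent sums, one for each pair $(n,r)$.

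Finally, each individual factor is
\[
\sum_{\deg(J)=n}\frac{1}{J!\,\prod_u u^{J(u)}}\,\beta^{-|J|},
\]
which is precisely the formula (\ref{coeffdfn2})--(\ref{laurentpoly2}) for $E(x_n x_n^*)$. By Theorem \ref{variancepmf} this equals $\prod_{k=1}^n\bigl(\tfrac{1}{k}\beta^{-1}+\tfrac{k-1}{k}\bigr)$. Since the pair $(n,r)$ ranges over $1\le r\le p(n)$ for each $n\ge 1$, grouping gives exponent $p(n)$ on the $n$-th factor, yielding the stated product. I expect no real obstacle here; the one thing worth checking carefully is the bookkeeping in the factorization step, i.e.\ that multiplicity-free-ness of $q$ (there being only one $K$) is exactly what allows $K!=L!$ to cancel cleanly. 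If $q$ had multiple nonzero entries, the analogous cancellation would fail and the sum over compatible splittings of $L$ among the $K_{m,s}$'s would obstruct factorization — this is why the theorem is confined to the multiplicity-free case $q=e_d$.
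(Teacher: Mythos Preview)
Your proposal is correct and follows essentially the same approach as the paper: specialize Theorem \ref{gausscase1} to $q=e_d$, note that the single $K_{d,1}$ is forced to equal $\sum_{n,r}J_{n,r}$ so that $K_{d,1}!$ cancels with $L!$, factor the remaining sum over the independent $J_{n,r}$'s, and invoke Theorem \ref{variancepmf} for each factor. Your additional remark explaining why the cancellation fails when $|q|>1$ is a nice clarification that the paper leaves implicit.
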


\begin{proof}In this case
$$\sum_{n,r}J_{n,r}=K_{d,1} $$
i.e. in the diagram in the previous subsection we can determine $K_{d,1}$ from the image in the diagonal $\Delta(\mathcal P_d)$. Furthermore there are not any constraints on
the $J_{n,r}$ beyond the conditions $deg(J_{n,r})=n$ and $r\le p(n)$. Therefore
$$E(x^p (x_d)^*)=\sum \frac{K_{d,1}!}{\prod_{n,r}J_{n,r}!K_{d,1}!\prod_{n,r,u} u^{J_{n,r}(u)}}\beta^{-|K_{d,1}|} $$
$$=\sum \frac{1}{\prod_{n,r}J_{n,r}!\prod_{n,r,u} u^{J_{n,r}(u)}}\beta^{-\sum_{n,r,u}J_{n,r}(u)}$$
$$=\prod_{n,r} \sum_{u} \frac{1}{\prod J_{n,r}(u)!u^{J_{n,r}(u)}}\beta^{-J_{n,r}(u)}$$
$$=\prod_{n\ge 1} \left(\prod_{k=1}^{n}(\frac{1}{k}\beta^{-1}+\frac{k-1}{k})\right)^{p(n)} $$
where the last step uses Theorem \ref{variancepmf}.

\end{proof}

One might suspect that if $E(x^p(x^q)^*)$ is the generating function of a probability measure, then either $|p|=1$ or $|q|=1$. This is false. For example
$$E(x_1x_2(x_1x_2)^*)=\frac 34 \beta^{-3}+\frac 14 \beta^{-2} $$
is a probability generating function.

\subsubsection{The General Case II}

We now want to attach an interpretation to the formula for $E(x^p(x^q)^*)$ in Theorem \ref{gausscase1}. In general $E(x^p(x^q)^*)$ is the generating function for a finite measure,
but $E(x^p(x^q)^*)\vert_{\beta=1}=\sum a_k$ is not necessarily one (e.g. $E(x_1^p(x_1^p)^*)=p!\beta^{-p}$), and $E(x^p(x^q)^*)$ does not have a simple factorization as in Theorem \ref{generalizationformula} (e.g.
$$E(x_2^2(x_2^2)^*)=\frac 32 \beta^{-4}+\beta^{-3}+\frac 12 \beta^{-2} $$
has two complex roots).

\begin{notation} Define $f(p,\cdot):\mathcal P_d\to \{0,1,...\}$ by
$$f(p,L)= \sum_{J_{n,r}}\left(\begin{matrix}L\\(J_{n,r})\end{matrix}\right)$$
when $L$ can be written as $L=\sum_{n,r}J_{n,r}$ with $deg(J_{n,r})=n$, $r\le p(n)$,
and $f(p,L)=0$ otherwise, where
$$\left(\begin{matrix}L\\(J_{n,r})\end{matrix}\right)=
\prod_{u}\left(\begin{matrix}L(u)\\(J_{n,r}(u))\end{matrix}\right)
=\prod_{u}\frac{L(u)!}{\prod_{n,r}J_{n,r}(u)!} $$

Also let $E_{\mathcal P_d}$ denote expectation with respect to the probability measure on $\mathcal P_d$
induced by normalized Haar measure on the symmetric group $S_d$.
\end{notation}

\begin{theorem}\label{bosonicsum}Assume $d=deg(p)=deg(q)$. Then
$$E(x^p(x^q)^*)=\sum_{k=1}^d a_k\beta^{-k}=E_{\mathcal P_d}\left( f(p,L)f(q,L)\beta^{-|L|}\right)$$
\end{theorem}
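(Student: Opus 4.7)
The plan is to show the identity by direct re-indexing of the sum already obtained in Theorem \ref{gausscase1}. Because the only constraint linking the $J_{n,r}$'s and the $K_{m,s}$'s is the equality of their multi-index sums, the natural strategy is to fix $L := \sum_{n,r} J_{n,r} = \sum_{m,s} K_{m,s}$ as an outer summation variable (a density, hence a partition of $d$) and decouple the inner sums.

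First I would rewrite the general term in Theorem \ref{gausscase1}. Using the multi-index convention $L! = \prod_u L(u)!$ and the fact that $(\sum_{n,r} J_{n,r})(u) = L(u)$, the factor $(\sum_{n,r} J_{n,r})!/\prod_u u^{(\sum_{n,r} J_{n,r})(u)}$ equals $L!/\prod_u u^{L(u)}$, which depends only on $L$. Thus
\[
E(x^p(x^q)^*) = \sum_L \beta^{-|L|}\,\frac{L!}{\prod_u u^{L(u)}}\,
 \Bigl(\sum_{(J_{n,r})}\tfrac{1}{\prod_{n,r} J_{n,r}!}\Bigr)
 \Bigl(\sum_{(K_{m,s})}\tfrac{1}{\prod_{m,s} K_{m,s}!}\Bigr),
\]
where the inner sums run over admissible collections with $\sum_{n,r} J_{n,r} = L = \sum_{m,s} K_{m,s}$.

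The key combinatorial step is the multinomial identity
\[
\frac{1}{\prod_{n,r} J_{n,r}!} = \frac{1}{L!}\binom{L}{(J_{n,r})},
\qquad
\binom{L}{(J_{n,r})} := \prod_u \frac{L(u)!}{\prod_{n,r} J_{n,r}(u)!},
\]
which follows by factoring $L! = \prod_u L(u)!$. Summing over all admissible $(J_{n,r})$ with $\sum J_{n,r} = L$ and $\deg(J_{n,r}) = n$, $r \le p(n)$, gives exactly $f(p,L)/L!$ by definition of $f(p,\cdot)$, and similarly $f(q,L)/L!$ for the $(K_{m,s})$ sum. Substituting back yields
\[
E(x^p(x^q)^*) = \sum_L \beta^{-|L|}\,\frac{f(p,L)\,f(q,L)}{L!\prod_u u^{L(u)}}
= \sum_L \beta^{-|L|}\,\frac{f(p,L)\,f(q,L)}{\prod_u L(u)!\,u^{L(u)}}.
\]

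Finally, I would recall from the discussion preceding Lemma 1 of the previous subsection that the pushforward of normalized Haar measure on $S_d$ to $\mathcal P_d$ (via the cycle-type map) assigns to a partition with density $L$ exactly the weight $1/\prod_u L(u)!\,u^{L(u)}$, since that is the reciprocal of the centralizer order. Hence the displayed sum equals $E_{\mathcal P_d}\bigl(f(p,L)f(q,L)\beta^{-|L|}\bigr)$, as required. There is no serious obstacle here; the entire argument is a clean re-indexing plus the multinomial identity, and the only subtlety to be careful about is making sure the support condition on $f(p,L)$ (that $L$ admit a decomposition $\sum_{n,r} J_{n,r}$ with the prescribed degrees) is automatically enforced by the original sum in Theorem \ref{gausscase1}, so that extending the outer sum to all $L \in \mathcal P_d$ introduces only zero terms.
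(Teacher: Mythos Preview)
Your proposal is correct and follows essentially the same approach as the paper: fix $L=\sum_{n,r}J_{n,r}=\sum_{m,s}K_{m,s}$ as the outer summation variable, use the multinomial identity to rewrite $1/\prod J_{n,r}!$ as $\binom{L}{(J_{n,r})}/L!$, factor the decoupled inner sums into $f(p,L)f(q,L)$, and identify $1/\bigl(\prod_u L(u)!\,u^{L(u)}\bigr)$ with the centralizer weight on $\mathcal P_d$. The paper's own proof is the same computation, written slightly more tersely.
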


Note that in the variance case $p=\delta_d=q$, $f(p,\cdot)=f(q,\cdot)=1$.

\begin{proof}
$$E(x^p(x^q)^*)=\sum \frac{L!}{\prod_{n,r}J_{n,r}!\prod_{m,s}K_{m,s}!\prod_{u} u^{L(u)}}\beta^{-|L|} $$
$$=\sum_{L}\frac{L!}{\prod_u u^{L(u)}}\left(\sum \frac{1}{\prod_{n,r}J_{n,r}!\prod_{m,s}K_{m,s}!}\right)\beta^{-|L|}$$
where $L$ is constrained as in Lemma \ref{Lconstraint} and the inner sum is over $((J_{n,r}),(K_{m,s}))$ such that
$L=\sum_{n,r}J_{n,r}=\sum_{m,s}K_{m,s}$. This inner sum can be factored:
$$=\sum_{L}\frac{1}{L!\prod_u u^{L(u)}}\left(\sum_{J_{n,r}} \left(\begin{matrix}L\\(J_{n,r})\end{matrix}\right)\right)\left(\sum_{K_{m,s}} \left(\begin{matrix}L\\(K_{m,s})\end{matrix}\right)\right)\beta^{-|L|}$$
This is equivalent to the statement of the theorem.

\end{proof}

\section{The $\alpha$ Distribution for $x$}\label{proof2}

In the remainder of the paper we suppose that the random variables $\alpha_1,...\in\Delta$ are distributed
according to
\begin{equation}\label{alphadistribution}\prod_{n=1}^{\infty}\frac{n\beta}{\pi}(1-|\alpha_n|^2)^{n\beta-1}d\lambda(\alpha_n) \end{equation}
and (for notational convenience) $\alpha_0=1$.
We compute
$$E(|\alpha_n|^{2K})=2\pi\int_{r=0}^1r^{2K}\frac{n\beta}{\pi}(1-r^2)^{n\beta-1}rdr$$
$$=n\beta B(K+1,n\beta)=\frac{\Gamma(K+1)\Gamma(n\beta+1)}{\Gamma(n\beta+K+1)}
=\frac{K!}{(n\beta+1)...(n\beta+K)}$$
where $K$ is not necessarily integral in the first two expressions, and $B$ denotes the beta function.
In particular
\begin{equation}\label{L1norm} E(|\alpha_n|)=\frac{\Gamma(\frac 32)\Gamma(n\beta+1)}{\Gamma(n\beta+\frac 32)}\sim \frac 1n \text{ as }n\to\infty
 \end{equation}
The moments of $\alpha$ are given by
\begin{equation}\label{alphamoments}E( \alpha^p(\alpha^q)^*)=\prod_{n\ge 1}\frac{\Gamma(p(n)+1)\Gamma(n\beta+1)}{\Gamma(n\beta+p(n)+1)}
=\prod_{n\ge 1}\frac{p(n)!}{(n\beta+1)...(n\beta+p(n))}
\end{equation}
if $p=q$ and zero otherwise.

Recall from Lemma \ref{volumelemma1} that the $n$th coefficient of $x=\gamma_2+\delta_2$ is expressible as an infinite series consisting of terms of the form
\begin{equation}\label{xformulas}\alpha_{i(1)}\alpha_{j(1)}^*...\alpha_{i(L)}\alpha_{j(L)}^*\end{equation}
where $L=length(i)$, the indices are decreasing, $i(1)>j(1)>...>j(L)\ge 0$, and the sum of the gaps $\sum_u (i(u)-j(u))=n$.

\subsection{Calculation of $E(x^p(x^q)^*)$}

If $deg(p)\ne deg(q)$, then the expectation $E(x^p(x^q)^*)$ vanishes by rotational symmetry of the expectation and the fact that rotations act on $x^p(x^q)^*$ by a nontrivial character.

Suppose that $deg(p)=deg(q)$ is fixed. Then
\begin{equation}\label{sum11}E(x^p(x^q)^*)
=E(\prod_{n} (\sum \alpha_{i_n(1)}\alpha_{j_{n}(1)}^*...)^{p(n)}\prod_{m} (\sum
(\alpha_{k_m(1)}\alpha_{l_{m}(1)}^*...)^{q(m)})^*) \end{equation}
where the $i_n,j_n$ (and also $k_m,l_m$) are multi-indices satisfying the constraints following (\ref{xformulas}). In order to take the $p(n)$ power, it is convenient to introduce independent copies of these multi-indices, which we denote by $i_{n,1},..$ (and similarly for $j,k,l$). Then (\ref{sum11})
\begin{equation}\label{sum12}=\sum_{i_{n,r},j_{n,r},k_{m,s},l_{m,s}} E\left(\prod_{n,r} (\alpha_{i_{n,r}(1)}\alpha_{j_{n,r}(1)}^*...)(\prod_{m,s} (
\alpha_{k_{m,s}(1)}\alpha_{l_{m,s}(1)}^*... ) )^*\right) \end{equation}
where the multi-indices satisfy the constraints as in (\ref{xformulas}).

Now consider one of the terms in this sum.  (\ref{alphamoments}) implies that for this expected value to be nonzero,
this term must have the form
\begin{equation}\label{formofterm}\left(\prod_{n,r} (\alpha_{i_{n,r}(1)}\alpha_{j_{n,r}(1)}^*...)\right)\left(\prod_{m,s} (
\alpha_{k_{m,s}(1)}\alpha_{l_{m,s}(1)}^*...  )\right)^*=\prod_{N\ge 1}|\alpha_N^{\mathbf m(N)}|^2 \end{equation}
where for each $N=1,2,...$
\begin{equation}\label{multiplicity}\mathbf m(N):=|\{(n,r,u):i_{n,r}(u)=N\}|+|\{(m,s,v):l_{m,s}(v)=N\}|\end{equation}
$$=|\{(n,r,u):j_{n,r}(u)=N\}|+|\{(m,s,v):k_{m,s}(v)=N\}|$$
In this case
$$ E\left(\prod_{n,r} (\alpha_{i_{n,r}(1)}\alpha_{j_{n,r}(1)}^*...)(\prod_{m,s} (
\alpha_{k_{m,s}(1)}\alpha_{l_{m,s}(1)}^*...  )^*\right) = \prod_N \frac{\mathbf m(N)!}{(N\beta+1)...(N\beta+\mathbf m(N))} $$

\begin{notation}\label{mnotation} Given a function $i$ on a finite domain with values in nonnegative integers, let $\delta_{i}$ denote the density of the image of the function. We can then write
$$\mathbf m=\sum_{n,r}\delta_{i_{n,r}}+\sum_{m,s}\delta_{l_{m,s}} $$
Note that $\mathbf m(0)$ can be nonzero, so that $\mathbf m$ is a multi-index, with the indexing starting at $0$.
\end{notation}

\begin{theorem}\label{alphasum} With respect to the distribution (\ref{alphadistribution}), if $d=deg(p)=deg(q)$,
then $E(x^p(x^q)^*)$ is the sum over multi-indices
\begin{equation}\label{msum}\sum_{\mathbf m} C(p,q,\mathbf m)\prod_N \frac{\mathbf m(N)!}{(N\beta+1)...(N\beta+\mathbf m(N))} \end{equation}
where $C(p,q,\mathbf m)$ is the number of tuples of multi-indices $(i_{n,r},j_{n,r},k_{m,s},l_{m,s})$
satisfying the conditions: (1) for $n,m\ge 1$, $1\le r\le p(n)$ and $1\le s\le q(m)$; (2)
$$i_{n,r}(1)>j_{n,r}(1)>i_{n,r}(2)...>j_{n,r}(length(j_{n,r}))\ge 0 $$
$$ k_{m,s}(1)>l_{m,s}(1)>k_{m,s}(2)>...>l_{m,s}(length(l_{m,s}))\ge 0, $$
$$\sum_u (i_{n,r}(u)-j_{n,r}(u))=n,\qquad \sum_v (k_{m,s}(v)-l_{m,s}(v))=m $$
and (3)
$$\mathbf m=\sum_{n,r}\delta_{i_{n,r}}+\sum_{m,s}\delta_{l_{m,s}} = \sum_{n,r}\delta_{j_{n,r}}+\sum_{m,s}\delta_{k_{m,s}} $$

If $deg(p)\ne deg(q)$, then $E(x^p(x^q)^*)=0$.

\end{theorem}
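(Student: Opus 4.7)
The plan is to carry out, in full, the computation sketched between display (\ref{sum11}) and the statement. The rotational vanishing is immediate: rotation by $\theta$ acts on $\alpha$ by $\alpha_n\mapsto e^{-in\theta}\alpha_n$, hence on each monomial in Lemma \ref{volumelemma1} by the total phase $e^{-i\theta\sum_u(i(u)-j(u))}=e^{-in\theta}$, so $x_n\mapsto e^{-in\theta}x_n$. Since (\ref{alphadistribution}) is rotation-invariant, $E(x^p(x^q)^*)$ is multiplied by $e^{-i\theta(\deg(p)-\deg(q))}$ and therefore vanishes when the degrees differ.

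Assume now $\deg(p)=\deg(q)=d$. Expand each $x_n$ appearing in $x^p$ and each $x_m$ appearing in $(x^q)^*$ via Lemma \ref{volumelemma1}; taking the $p(n)$-th power amounts to choosing $p(n)$ independent decreasing index patterns $(i_{n,r},j_{n,r})$, $1\le r\le p(n)$, and analogously $(k_{m,s},l_{m,s})$, $1\le s\le q(m)$, for $(x^q)^*$. This realizes $x^p(x^q)^*$ as an infinite sum, over all tuples $(i_{n,r},j_{n,r},k_{m,s},l_{m,s})$ satisfying conditions (1) and (2), of monomials in the $\alpha_N$ and $\alpha_N^*$. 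The $\alpha_N$ are independent and (\ref{alphamoments}) implies that a monomial $\prod_N \alpha_N^{a(N)}(\alpha_N^*)^{b(N)}$ has nonzero expectation only when $a(N)=b(N)$ for every $N$. Reading off $a(N)$ as the number of times the value $N$ appears among the $i_{n,r}$ together with the $l_{m,s}$, and $b(N)$ as the number of times it appears among the $j_{n,r}$ together with the $k_{m,s}$, this matching is exactly condition (3), and the common value is $\mathbf{m}(N)$. The surviving expected value is $\prod_N \mathbf{m}(N)!/\bigl((N\beta+1)\cdots(N\beta+\mathbf{m}(N))\bigr)$, depending only on $\mathbf{m}$. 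Grouping admissible tuples by their common multiplicity vector $\mathbf{m}$ then yields (\ref{msum}), with $C(p,q,\mathbf{m})$ equal to the number of admissible tuples producing $\mathbf{m}$.

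The main obstacle is not the algebra but the analysis: the infinite tuple sum in the previous paragraph must be absolutely summable in $L^1$ so that Fubini justifies the termwise application of (\ref{alphamoments}). The natural bound on a single monomial is $E|\text{monomial}|\le \prod_u E|\alpha_{i_{n,r}(u)}|\cdot E|\alpha_{j_{n,r}(u)}|$, and similarly for the $k,l$ factors, using independence; combined with (\ref{L1norm}), which gives $E|\alpha_N|\lesssim 1/N$, one expects absolute summability by exploiting the gap constraint $\sum_u(i(u)-j(u))=n$, which forces every large $i(u)$ to be paired with a comparably large $j(u)$ and so makes $\prod_u E|\alpha_{i(u)}|E|\alpha_{j(u)}|$ decay rapidly enough when the largest indices grow. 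Once this estimate is in hand, the four-step calculation above delivers the theorem.
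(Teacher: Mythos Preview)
Your argument is correct and follows exactly the route the paper takes: the discussion preceding the theorem statement in Section \ref{proof2} is itself the proof, and you have reproduced its steps (rotational vanishing, expansion via Lemma \ref{volumelemma1} with independent copies of the index patterns, identification of the surviving monomials via (\ref{alphamoments}), and grouping by the multiplicity vector $\mathbf m$). Your reading of which indices contribute unstarred versus starred $\alpha$'s, and hence of condition (3), is the same as the paper's (\ref{multiplicity}).

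The one point on which you go beyond the paper is the $L^1$ summability needed for Fubini; the paper simply performs the termwise computation without comment, whereas you flag the issue and sketch why the gap constraint together with (\ref{L1norm}) should make the sum absolutely convergent. That is a legitimate concern and your heuristic is the right one, but note that neither you nor the paper actually carries it out, so if you want a fully rigorous version you would still need to supply that estimate.
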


The sum (\ref{msum}) is a holomorphic function of $\beta$ with apparent poles at the points $\beta=-k/N$,
$k\le \mathbf m(N)$, $N=1,...$, and a potential essential singularity at $\beta=0$. The following is essentially a
combinatorial statement. It is equivalent to (\ref{maintheorem2}).

\begin{theorem}\label{CNidentity} Suppose that $p,q$ are multi-indices with $d=deg(p)=deg(q)$.
Then the sum in Theorem \ref{alphasum},
$$\sum_{\mathbf m} C(p,q,\mathbf m)\prod_N \frac{\mathbf m(N)!}{(N\beta+1)...(N\beta+\mathbf m(N))}$$
equals the sum in Theorem \ref{bosonicsum} (which is a polynomial of degree $d$ in $\beta^{-1}$)
$$E_{\mathcal P_d}\left( f(p,L)f(q,L)\beta^{-|L|}\right)$$
where
$$f(p,L)= \sum_{J_{n,r}}\left(\begin{matrix}L\\(J_{n,r})\end{matrix}\right)$$
when $L\in \mathcal P_d$ can be written as $L=\sum_{n,r}J_{n,r}$ with $deg(J_{n,r})=n$, $r\le p(n)$,
and $f(p,L)=0$ otherwise.
\end{theorem}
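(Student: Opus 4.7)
My plan is to obtain Theorem \ref{CNidentity} as the direct moment translation of the Chhaibi--Najnudel pushforward equality (\ref{maintheorem2}). By (\ref{maintheorem2}), for $\beta^2 < 2$ the joint law of the coefficient sequence $(x_1, x_2, \ldots)$ is the same whether $x = \exp(-f_+)$ with $f_+$ drawn from $\nu_\beta$, or $x = \gamma_2 + \delta_2$ with $\alpha$ drawn from the product measure (\ref{alphadistribution}). In particular every mixed polynomial moment $E(x^p (x^q)^*)$ agrees on the two sides. Theorem \ref{bosonicsum} evaluates this moment on the Gaussian side as $E_{\mathcal P_d}\bigl(f(p,L) f(q,L) \beta^{-|L|}\bigr)$, while Theorem \ref{alphasum} evaluates it on the $\alpha$ side as $\sum_{\mathbf m} C(p,q,\mathbf m)\prod_N \frac{\mathbf m(N)!}{(N\beta+1)\cdots(N\beta+\mathbf m(N))}$. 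Setting the two evaluations equal is exactly Theorem \ref{CNidentity}.

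Carrying this out involves a few checks. First, mixed polynomial moments of the $x_n$'s are finite on both sides: on the Gaussian side $x_n$ is a polynomial in $f_1, \ldots, f_n$, which has all Gaussian moments; on the $\alpha$ side each $x_n$ is an almost surely absolutely convergent series in $\alpha$'s whose terms are bounded in modulus by $1$, and the estimates implicit in (\ref{alphamoments}), in particular $E(|\alpha_N|^2) = 1/(N\beta+1)$, supply enough $L^p$ control to deduce finite moments from Lemma \ref{volumelemma1}. Second, (\ref{maintheorem2}) is a statement about the full infinite sequence $x$, so it propagates to joint moments of any finite collection of coordinates, not just marginals. Third, Theorem \ref{CNidentity} is stated without any restriction on $\beta$, whereas (\ref{maintheorem2}) is currently available only for $\beta^2 < 2$; however, the Gaussian side is a polynomial in $\beta^{-1}$ of degree at most $d$, and the $\alpha$ side (once its convergence as a meromorphic function of $\beta$ on $(0, \infty)$ is established) has only poles at rational negatives of $1/N$. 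Agreement of two such analytic functions of $\beta$ on the interval $(0, \sqrt 2)$ then forces agreement on all of $(0, \infty)$ by analytic continuation.

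The main obstacle I expect is justifying the Fubini interchange and the absolute convergence of the $\alpha$-side sum over the multi-indices $\mathbf m$. The set of admissible $\mathbf m$ is a priori infinite because the constraints $\sum_u (i_{n,r}(u) - j_{n,r}(u)) = n$ bound only the gap-sums and not the individual index values, so the support of $\mathbf m$ can be arbitrarily large. Controlling this requires a tail estimate that exploits the fast decay of $E(|\alpha_N|^2) = 1/(N\beta+1)$ for large $N$ together with the cancellation/collision structure of bulk and boundary terms formalized in Definition \ref{bulk/boundary}. Once this quantitative bound is in hand, everything else is a direct comparison of the two explicit formulas coming from Theorems \ref{bosonicsum} and \ref{alphasum}, invoked as consequences of (\ref{maintheorem2}); giving a conceptual, Chhaibi--Najnudel-free derivation of the same identity is the goal of \cite{LP}, but lies beyond the scope of the present paper.
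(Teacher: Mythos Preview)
Your proposal is correct and matches the paper's approach: the paper does not give an independent proof of Theorem~\ref{CNidentity} but simply remarks that it ``is equivalent to (\ref{maintheorem2})'', i.e., that it is the moment-by-moment restatement of the Chhaibi--Najnudel pushforward identity, with Theorems~\ref{bosonicsum} and~\ref{alphasum} supplying the two sides. Your additional care about moment finiteness, Fubini, and analytic continuation in $\beta$ goes a bit beyond what the paper writes out explicitly, but is entirely in the spirit of the surrounding discussion (the paper likewise flags that a direct, CN-free proof is deferred to \cite{LP}).
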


\subsection{A Graphical Interpretation}

In this subsection we will describe a graphical interpretation of the combinatorial coefficient
$C(p,q,\mathbf m)$ in Theorem \ref{alphasum}. As in the statement of the Theorem, $d=deg(p)=deg(q)$
and $\mathbf m$ is a multi-index (where the indexing for $\mathbf m$ starts at $0$).

\begin{definition} A directed graph $G$ with set of vertices $V=\{i\in \mathbb Z_{\ge 0}: \mathbf{m}(i)>0 \}$ satisfies the   $\mathbf{m}$-\textbf{condition} if for each vertex $i\in V$, the number of ingoing edges equals the number of outgoing edges equals $\mathbf{m}(i)$. For such a graph $G$, we define the edge weight from $i$ to $j$ by $w_{ij}:=|j-i|$.
\end{definition}

We now describe a construction of a graph $G$ for given $\mathbf{m}$. We first construct an auxiliary graph $G_\mathbf{m}$ and then realize $G$ as a quotient of $G_\mathbf{m}$ by identifying some of it vertices.
Let $r=L(\mathbf{m})=\{ i: \mathbf{m}(i) >0 \}$, and define $G_\mathbf{m}$ to be the complete directed $r$-partite graph such that the partite corresponding to $i\in \mathbb Z_{\ge 0}$ has exactly $\mathbf{m}(i)$ vertices.



\begin{lemma}\label{lemma-cdecomposition}
Let $C_1, C_2, \dots$ be a disjoint collection of cycles of the directed graph $G_\mathbf{m}$ which includes all
vertices. Consider the subgraph $H=C_1\cup C_2 \cup \dots$ of $G_\mathbf{m}$ and identify the vertices in $H$ that correspond to the same integer. The graph $G$ constructed in this way will satisfies the $\mathbf{m}$-condition.
\end{lemma}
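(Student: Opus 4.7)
The plan is to verify the $\mathbf{m}$-condition by a direct degree count on $H$, then transport it through the quotient map to $G$. The only fact one really needs is that a vertex-disjoint cycle cover contributes in-degree $1$ and out-degree $1$ at every vertex it covers.

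First I would fix notation: for each $i \in V$, let $V_i$ denote the partite class of $G_{\mathbf{m}}$ corresponding to $i$, which by construction has cardinality $\mathbf{m}(i)$. Since the $C_k$ are vertex-disjoint and collectively include every vertex of $G_{\mathbf{m}}$, each vertex $v$ of $G_{\mathbf{m}}$ lies in exactly one cycle $C_{k(v)}$, and therefore has exactly one incoming edge and one outgoing edge in $H = \bigcup_k C_k$. Summing this local balance over the partite class yields
\[
\sum_{v \in V_i} \mathrm{indeg}_H(v) \;=\; \mathbf{m}(i) \;=\; \sum_{v \in V_i} \mathrm{outdeg}_H(v).
\]

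Next I would unpack the quotient step. When the vertices of $V_i$ are identified to a single vertex (also called $i$) of $G$, every edge of $H$ descends to an edge of $G$ with multiplicity preserved, so the set of edges incoming to $i \in G$ is in bijection with the disjoint union $\bigsqcup_{v \in V_i}\{\text{edges of } H \text{ entering } v\}$, and similarly for outgoing edges. Combining with the display above gives $\mathrm{indeg}_G(i) = \mathbf{m}(i) = \mathrm{outdeg}_G(i)$ for every $i \in V$, which is the $\mathbf{m}$-condition. (Since $G_{\mathbf{m}}$ is $r$-partite, no edge of $H$ joins two vertices of the same $V_i$, so no self-loops appear in $G$; this is not needed for the count but is worth noting.)

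There is no real obstacle here: the statement is essentially a bookkeeping consequence of the two defining properties of the cycle cover, namely vertex-disjointness (which gives local balance in $H$) and covering every vertex of $G_{\mathbf{m}}$ (which ensures the local-balance count at $V_i$ is exactly $\mathbf{m}(i)$ rather than something smaller). The one point to state carefully is that $G$ should be regarded as a directed multigraph, so that parallel edges coming from distinct cycles passing between the same pair of integers are counted with multiplicity in $\mathrm{indeg}_G$ and $\mathrm{outdeg}_G$.
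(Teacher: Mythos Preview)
Your argument is correct and follows essentially the same approach as the paper: both proofs observe that each vertex of $H$ has in-degree and out-degree equal to $1$ (since the cycles are disjoint and cover all vertices), and then sum over the $\mathbf{m}(i)$ vertices in each partite class to obtain the $\mathbf{m}$-condition after identification. Your version simply spells out the bookkeeping more explicitly and adds the (correct but inessential) remarks about multigraphs and the absence of self-loops.
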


\begin{proof} By construction for each vertex in $H$ both the ingoing number of edges and outgoing number of edges are equal to one. Now, by definition, every integer $i$ has $\mathbf{m}(i)$ corresponding vertices in $G_\mathbf{m}$. Therefore, after the identification of vertices above any given integer $i$, both the ingoing and outgoing number of edges is equal to $\mathbf{m}(i)$.
\end{proof}

We now discuss the coloring of such a directed graph given budget constraints. For $i<j$ we call $e_{ij}$ (resp. $e_{ji}$) a positively oriented edge (resp. negatively oriented). We will use distinct sets of colors for each orientation.

\begin{definition} Consider a directed graph $G$ satisfying the $\mathbf{m}$-condition. Choose $|p|$ 'positive' colors and $|q|$ 'negative' colors, and partition the set of positive colors so that $p(u)$  of them have budget $u$. Similarly, partition the set of negative colors so that $q(u)$  of them have budget $u$ (If a color has budget $u$ it means that the total weight of edges colored with it have to add up to $u$). \\

A coloring of $G$ is \textbf{non-overlapping} if $i<j\leq k<l$ implies that the edges $e_{ik}$ and $e_{jl}$ have distinct colors (similarly the negatively oriented edges $e_{lj}$ and $e_{ki}$ should have distinct colors)

\end{definition}

Let's give an explicit example of the budget rule based on figure \ref{fig-graph}. If, for example, we have $q=(0,2,0,1,0,\dots)$ for negatively oriented edges. It implies that we have a total of 3 distinct colors, where 2 of them each have a budget constraint of $2$ and the other one has a budget constraint of $4$. Also, note that an edge $e_{ij}$ will allocate $|j-i|$ from the coloring budget based on the weight assigned to it.

\begin{figure}[H]
    \centering
    \includegraphics[width=350px]{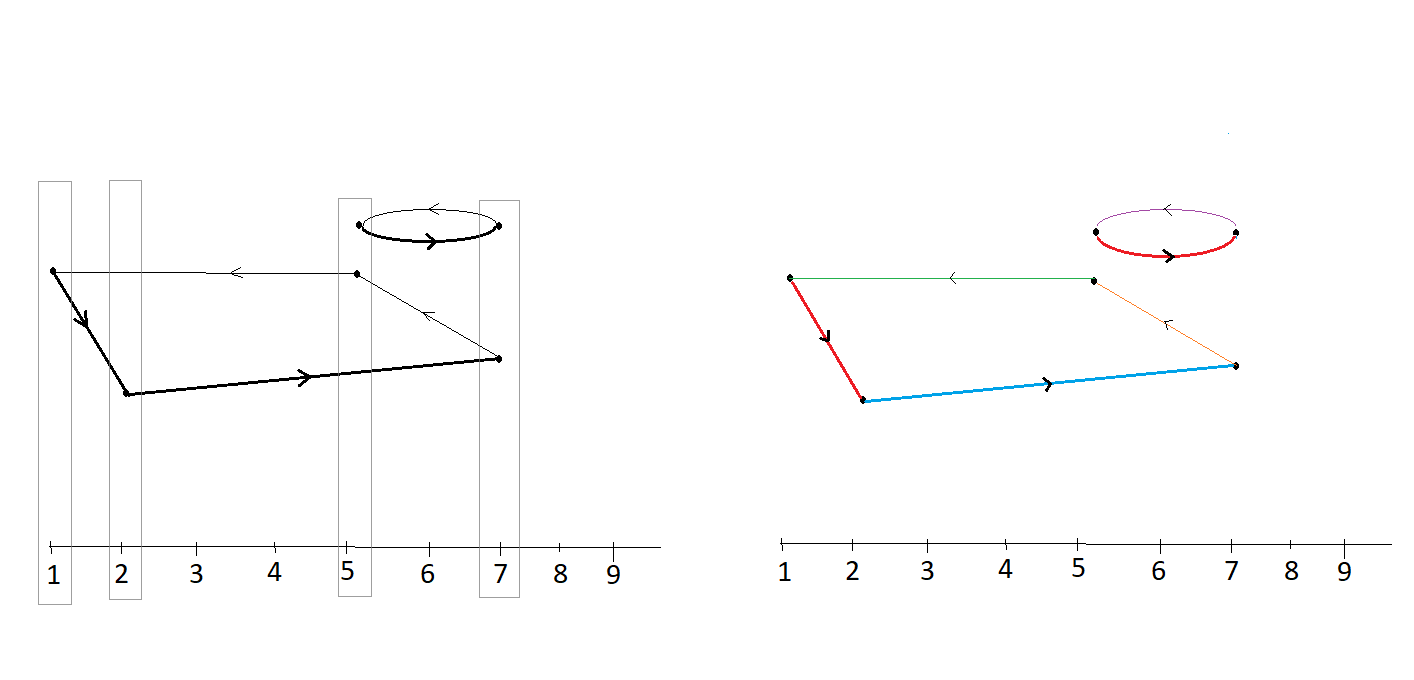}
    \caption{On the left, we see an example of a graph that after an identification along vertical strips will satisfy the $\textbf{m}$-condition, where  $\textbf{m}=(\mathbf m(0)=0,1,1,0,0,2,0,2,0,\dots)$ and bold edges are positively oriented. This can be seen as a cycle decomposition of the complete directed $r$-partite graph. On the right, we have a non-overlapping coloring with the budget constraint given by $p=(0,0,1,0,1,0,\dots)$ and $q=(0,2,0,1,0,\dots)$. }
    \label{fig-graph}
\end{figure}

\begin{theorem} Fix $p,q$ with $d=deg(p)=deg(q)$ and $\mathbf  m$. Let $G_1, \dots, G_n$ denote the set of all possible directed graphs $G$ satisfying $\mathbf{m}$-condition and let $c_j$ be the number of non-overlapping colorings of $G_j$ according to the color budget given by $p,q$. Then
$$C(p,q,\mathbf{m}) = c_1+\dots +c_n$$
Furthermore, the set of possible graphs satisfying $\mathbf{m}$-condition can be constructed as in Lemma \ref{lemma-cdecomposition}.
\end{theorem}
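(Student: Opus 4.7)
The plan is to prove the theorem by establishing a bijection between the set of tuples $(i_{n,r}, j_{n,r}, k_{m,s}, l_{m,s})$ enumerated by $C(p,q,\mathbf{m})$ in Theorem \ref{alphasum} and the set of pairs $(G, \text{coloring})$ consisting of a directed multigraph $G$ satisfying the $\mathbf{m}$-condition together with a non-overlapping coloring of its edges by the budgets prescribed by $p$ and $q$. Summing the number of colorings over the choice of $G$ then yields $C(p,q,\mathbf{m}) = c_1 + \dots + c_n$.

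For the forward direction, given such a tuple I construct $G$ on vertex set $\{N : \mathbf{m}(N) > 0\}$ by drawing, for each triple $(n,r,u)$, a positively oriented edge from $j_{n,r}(u)$ to $i_{n,r}(u)$ colored by the $r$-th positive color of budget $n$, and, for each triple $(m,s,v)$, a negatively oriented edge from $k_{m,s}(v)$ to $l_{m,s}(v)$ colored by the $s$-th negative color of budget $m$. Condition (3) of Theorem \ref{alphasum} translates exactly into the $\mathbf{m}$-condition (at each vertex both in- and out-degree equal $\mathbf{m}(N)$), and the weight-sum conditions in (2) become the color budget constraints. The strictly decreasing chain $i_{n,r}(1) > j_{n,r}(1) > i_{n,r}(2) > \dots$ forces the edges of a single color to have pairwise disjoint supporting intervals on $\mathbb Z_{\ge 0}$, which in particular excludes the interleaving configuration $i < j \le k < l$, so the resulting coloring is non-overlapping.

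For the inverse direction, given a valid pair $(G, \text{coloring})$ I recover the tuple by listing, within each positive color class $(n,r)$, its edges in decreasing order of their upper endpoints to obtain $i_{n,r}(1) > i_{n,r}(2) > \dots$, and then reading off the corresponding lower endpoints as the $j_{n,r}(u)$, and analogously for each negative color class. Verifying that this ordering actually produces the required strict chain $i(1) > j(1) > i(2) > j(2) > \dots \ge 0$ is the crucial point, and where I expect the main technical difficulty to lie: the paper's definition of non-overlapping only forbids strict crossings and shared middle endpoints, so one must argue, using the $\mathbf{m}$-condition together with the integral vertex labels and the budget constraints, that nested or endpoint-sharing same-color edges cannot actually arise in this graph-theoretic setting; alternatively one mildly strengthens the non-overlapping notion so the bijection goes through verbatim.

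For the \emph{furthermore} assertion I would argue exhaustiveness of the cycle decomposition construction of Lemma \ref{lemma-cdecomposition} via a standard blow-up. Given any $G$ satisfying the $\mathbf{m}$-condition, split each vertex $N$ into $\mathbf{m}(N)$ copies and distribute its $\mathbf{m}(N)$ incoming and $\mathbf{m}(N)$ outgoing edges bijectively (in an arbitrary way) among the copies; the resulting graph has in-degree and out-degree equal to $1$ at every vertex, so its edges decompose into a disjoint union of cycles sitting inside the complete directed multipartite graph $G_{\mathbf{m}}$, and quotienting by the partite structure recovers $G$. This shows that every $G$ satisfying the $\mathbf{m}$-condition is realized as such a quotient, so the list $G_1, \dots, G_n$ in the theorem exhausts the possibilities.
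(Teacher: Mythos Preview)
Your approach is essentially the same as the paper's: both proofs work by translating the index tuples of Theorem \ref{alphasum} into colored directed graphs, with the pairs $(i_{n,r}(u),j_{n,r}(u))$ becoming positively oriented edges colored by the pair $(n,r)$ and analogously for the negative colors. The paper's proof is in fact much terser than yours---it only spells out the forward map and leaves the inverse direction, the verification of the $\mathbf m$-condition, and the ``furthermore'' clause implicit---so your added detail (especially the blow-up argument for exhaustiveness) is a genuine improvement rather than a departure.

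Your caveat about the inverse direction is well taken and does not reflect a defect in your argument but rather a looseness in the paper's definition of ``non-overlapping.'' As stated, the condition $i<j\le k<l \Rightarrow e_{ik},e_{jl}$ have distinct colors forbids crossings and shared interior endpoints but does not literally exclude nested same-color edges (e.g.\ $[2,5]$ and $[3,4]$), which would obstruct recovering a strictly decreasing chain $i(1)>j(1)>i(2)>\dots$. The paper does not address this; the clean fix is exactly the one you suggest, namely to read ``non-overlapping'' as requiring that the closed intervals supported by same-color edges be pairwise disjoint (equivalently, that one can sort them into a strictly decreasing chain). With that reading the bijection is immediate in both directions, and the paper's intent is clearly this.
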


\begin{proof}
Here is how one translates the index families in theorem \ref{alphasum} into a graph with integer vertices. The indices defined by $i_{n,r}$ and $j_{n,r}$ (resp. $k_{m,s}$ and $l_{m,s}$) identify the positively oriented (resp. negatively oriented) edges in the graph, in the following way. For any pair $(n,r)$ and  $1\leq z \leq length(j_{n,r})$ we define a positively oriented edge from the vertex $v = i_{n,r}(z)$ to $w = j_{n,r}(z)$. The rule is similar for negatively oriented edges.
\end{proof}

We have the following dictionary for various combinatorial quantities, consistent with previous sections:

\begin{itemize}
    \item $L(\mathbf{m}) $: number of nodes in $G$.
    \item $|\mathbf{m}|$: number of edges in $G$.
    \item $\mathbf{m}(i)$: in-going and out-going degree of node $i$.

    \item $deg(p) $; total color budget used to color positively oriented edges.
    \item $deg(q) $: total color budget used to color negatively oriented edges.
    \item $|p|$: number of distinct colors used to color positively oriented edges.
    \item $|q|$: number of distinct colors used to color negatively oriented edges.
    \item $p(i)$: number of colors with budget $i$ for positively oriented edges.
    \item $q(i)$: number of colors with budget $i$ for negatively oriented edges.
\end{itemize}

\end{document}